\documentclass[11pt]{amsart}

\usepackage[margin=1.2in]{geometry}
\usepackage[T1]{fontenc}
\usepackage{lmodern}
\usepackage{amsmath,amssymb,amsthm,mathtools}
\usepackage{xcolor}
\definecolor{linkblue}{rgb}{0.1,0.2,0.5}
\usepackage[colorlinks=true,linkcolor=linkblue,citecolor=linkblue,urlcolor=linkblue]{hyperref}

\newtheorem{theorem}{Theorem}[section]
\newtheorem{proposition}[theorem]{Proposition}
\newtheorem{lemma}[theorem]{Lemma}
\newtheorem{corollary}[theorem]{Corollary}
\newtheorem{conjecture}[theorem]{Conjecture}

\theoremstyle{definition}
\newtheorem{definition}[theorem]{Definition}
\newtheorem{example}[theorem]{Example}

\theoremstyle{remark}
\newtheorem{remark}[theorem]{Remark}

\newcommand{\Aut}{\operatorname{Aut}}

\newcommand{\SB}[2]{\mathrm{SB}(#1,#2)}
\newcommand{\RL}{\mathfrak{R}_{L}}
\newcommand{\Id}{\operatorname{Id}}

\title[The left-nilpotent residual]{The Left-Nilpotent Residual and Its Supplements\\\ in Finite Skew Braces}
\author{G\"{U}L\.{I}N ERCAN}
\address{Department of Mathematics, Middle East Technical University, Ankara, Turkey}
\email{ercan@metu.edu.tr}

\author{\.{I}SMA\.{I}L
 \c{S}.\ G\"ulo\u{g}lu}
\address{Department of Mathematics, Do\u{g}u\c{s} University, \.{I}stanbul, Turkey}
\email{iguloglu@dogus.edu.tr}

\date{August 27, 2026}

\subjclass[2020]{Primary 16T25; Secondary 20D15, 20F18, 81R50}
\keywords{skew brace, left nilpotency, left-nilpotent residual, final commutator, lambda action, ideal, nilpotent residual, Yang--Baxter equation}

\begin{document}

\begin{abstract}
Let $X$ be a finite skew brace, and let $L_\infty(X)$ be the final term
of its left series. We introduce the left-nilpotent residual $\RL(X)$
and prove that
$
\RL(X)=\Id_X\bigl(L_\infty(X)\bigr).
$
Thus $X/\RL(X)$ is the largest left-nilpotent quotient of $X$. For skew
braces of nilpotent type, we relate $\RL(X)$ to
$\gamma_\infty(X,\cdot)$ and obtain conditions ensuring that
$L_\infty(X)$ is an ideal, including the case where $|X|$ is cube-free.
Our main results concern supplements to $\RL(X)$. We prove existence
results using coprime action and Sylow and Hall theory, and give an
example of order $18$ whose residual has no proper supplement. Finally,
iterating $\RL$ yields the largest perfect subskew brace of $X$, and $X$
is KSV-solvable if and only if this subskew brace is zero.
\end{abstract}

\maketitle

\section{Introduction}\label{sec:intro}

Skew braces, introduced by Guarnieri and Vendramin \cite{GV} as a
non-commutative generalization of Rump's braces \cite{Rump}, provide an
algebraic framework for bijective non-degenerate set-theoretic
solutions of the Yang--Baxter equation.  Several notions of nilpotency
occur naturally in this setting.  In this paper we are concerned with
\emph{left nilpotency}, defined by the left series
\[
X^1=X,\qquad X^{n+1}=X*X^n.
\]
For skew braces of nilpotent type, Ced\'o, Smoktunowicz and Vendramin
proved that a finite skew brace is left nilpotent if and only if its
multiplicative group is nilpotent \cite{CSV}.

The star product is closely related to commutators arising from the
lambda action (see \cite[Section~2.3]{DelCorso}). For background on the
left series, we refer to
\cite[Chapter~12]{CedoVendraminBook}. In Section~3, we recall the
precise relation using our commutator convention and apply the
repeated- and final-commutator theory of
Isaacs--Meierfrankenfeld~\cite{IM}.
Let
\[
G=(X,+),\qquad A=(X,\cdot),\qquad
\Lambda=\lambda(A)\leq\Aut(G).
\]
Commutators with $A$ are taken through the lambda action. Since
$\ker\lambda$ acts trivially on $G$ and $\lambda(A)=\Lambda$, repeated
commutators with $A$ and with $\Lambda$ coincide. Therefore
\[
X^{n+1}
=[G,\underbrace{\Lambda,\ldots,\Lambda}_{n}]
=[G,{}_nA].
\]
Thus, for finite $X$, the final term $L_\infty(X)$ is the final
commutator of $G$ by $A$.

The first part of the paper concerns the properties of this final
term. In particular, $L_\infty(X)$ is normal in $(X,+)$
if and only if $\gamma_\infty(X^2,+)\leq L_\infty(X)$. Moreover, when
$L_\infty(X)$ is additively normal, it follows from
\cite[Proposition~2.2]{DE} that $L_\infty(X)$ is an ideal if and only
if $L_\infty(X)*X\subseteq L_\infty(X)$.
Examples of orders $18$ and $36$ show respectively that the final term
need not be an ideal and need not even be normal in the additive
group.

Since $L_\infty(X)$ need not be an ideal, it does not always define a
quotient skew brace.  We therefore introduce the \emph{left-nilpotent
residual}
\[
\RL(X)=\bigcap\{I\trianglelefteq X:X/I\text{ is left nilpotent}\}.
\]
To the best of our knowledge, this residual has not previously been
considered in the theory of skew braces.  We establish the fundamental
description
\[
\RL(X)=\Id_X\bigl(L_\infty(X)\bigr).
\]
Consequently, $X/\RL(X)$ is the largest left-nilpotent quotient of
$X$. We describe the behavior of $\RL$ under epimorphisms, quotients
and direct products, its relation with subskew braces, and the
inclusions
\[
L_\infty(X)\leq \RL(X)\leq X^2.
\]
For Sylow theory and bounds on the left-nilpotency class of
left-nilpotent skew braces, we refer to \cite{EGGK}.

For skew braces of nilpotent type we obtain the second description
\[
\RL(X)=\Id_X\bigl(\gamma_\infty(A)\bigr).
\]
Put $Q=G/L_\infty(X)$, and let $\Delta$ be the group induced by
$\Lambda$ on $Q$. The image of $A$ in the affine group
$Q\rtimes\Delta$ acts faithfully and transitively on $Q$. We prove
that $L_\infty(X)$ is an ideal if and only if this action is regular.
Moreover, $Q\rtimes\Delta$ is nilpotent and hence is the direct product
of its Sylow subgroups. Consequently, a counterexample can exist only
if there is a prime $p$ such that
\[
p\mid\bigl|(L_\infty(X),\cdot)_{\mathrm{ab}}\bigr|
\qquad\text{and}\qquad
p^2\mid[X:L_\infty(X)].
\]
It follows that $p^3\mid|X|$; in particular, if $|X|$ is cube-free,
then $L_\infty(X)$ is an ideal of $X$.  We also prove a criterion of
a different nature, on the gap between $L_\infty(X)$ and its ideal
closure: if
\[
\gcd\bigl(|L_\infty(X)|,\,[\RL(X):L_\infty(X)]\bigr)=1,
\]
then $L_\infty(X)$ is an ideal of $X$.  Hence in any counterexample
the residual gap shares a prime divisor with $|L_\infty(X)|$.

The residual also gives two further structural directions. First, let
$I\trianglelefteq X$ and assume that $X/I$ is left nilpotent. Starting
with $I_0=I$ and defining $I_{n+1}=X*I_n$, we prove that this sequence
stabilizes at $L_\infty(X)$. Consequently, $X$ is left nilpotent if and
only if $I_n=0$ for some $n$. Second, iterating the residual produces a
descending chain of subskew braces. Its final term is the largest
perfect subskew brace of $X$, and the chain reaches $0$ exactly for
KSV-solvable skew braces.

The main part of the paper concerns supplements to $\RL(X)$. Let
$\pi$ be the set of prime divisors of $|\Lambda|$, and let
$N=O^{\pi'}(G)$ be the $\pi'$-residual of $G$. Thus $N$ is the
smallest normal subgroup of $G$ such that $G/N$ is a $\pi'$-group.
The subgroup $N$ is characteristic in $G$ and hence is a strong left
ideal of $X$. Let $\rho:G\to G/N$ be the natural map, and let
$\bar\Lambda$ be the group induced by $\Lambda$ on $G/N$. Since
$\bar\Lambda$ is a $\pi$-group and $G/N$ is a $\pi'$-group, this
action is coprime. Put
\[
C=\rho^{-1}\bigl(C_{G/N}(\bar\Lambda)\bigr).
\]
Using the standard coprime-action decomposition, we prove that
$C=\{c\in X:X*c\subseteq N\}$ is a left ideal of $X$ and that
\[
X=L_\infty(X)+C=\RL(X)+C.
\]
Thus $C$ is a supplement to $\RL(X)$. Moreover, $N$ is an ideal of
$C$ and $C/N$ is a trivial skew brace. The construction therefore
gives a supplement with additional internal structure.

By the Schur--Zassenhaus theorem for finite skew braces \cite{DameleSZ},
$\RL(X)$ has a left-nilpotent complement whenever $|\RL(X)|$ and
$[X:\RL(X)]$ are coprime. The Sylow and Hall theorems proved in
\cite{Truman} yield further supplements when $X/\RL(X)$ has
prime-power order and when both underlying groups of $X$ are solvable,
respectively (see \cite{CDDFT} for earlier results under additional
structural hypotheses). These conclusions do not hold without further
assumptions: a skew brace of order $18$ has no proper supplement to
its residual.

The paper is organized as follows.  Section~\ref{sec:prelim} contains
preliminaries.  Section~\ref{sec:dict} recalls the commutator
description of the left series.  Section~\ref{sec:final} collects the
properties of its final term.  Section~\ref{sec:res} develops the
properties of the left-nilpotent residual.  Section~\ref{sec:ideality}
studies the ideality problem for the final term, with particular
emphasis on skew braces of nilpotent type.  Section~\ref{sec:ext}
studies extensions and iteration, and Section~\ref{sec:supp} treats
supplements and complements of the residual.

\section{Preliminaries}\label{sec:prelim}
Throughout the paper, all skew braces are assumed to be finite unless
explicitly stated otherwise.
A \emph{skew (left) brace} is a triple $X=(X,+,\cdot)$ such that
$(X,+)$ and $(X,\cdot)$ are groups on the same set and
\[
a\cdot(b+c)=a\cdot b-a+a\cdot c
\qquad(a,b,c\in X).
\]
The common identity is denoted by $0$.  The lambda map
\[
\lambda:(X,\cdot)\longrightarrow\Aut(X,+),\qquad
\lambda_a(b)=-a+a\cdot b,
\]
is a group homomorphism, and the associated star product is
\[
a*b=\lambda_a(b)-b=-a+a\cdot b-b.
\]
For subsets $S,T\subseteq X$, the notation $S*T$ means the subgroup of
$(X,+)$ generated by all $s*t$ with $s\in S$ and $t\in T$.

A subgroup $I\le(X,+)$ is a \emph{left ideal} if it is invariant under
every $\lambda_a$.  A left ideal is automatically a subskew brace.  It
is \emph{strong} if $I\trianglelefteq(X,+)$, and it is an \emph{ideal}
if it is normal in both underlying groups.  We write $\Id_X(S)$ for
the smallest ideal of $X$ containing a subset $S$.

The left series is
\[
X^1=X,\qquad X^{n+1}=X*X^n\quad(n\geq1).
\]
The skew brace $X$ is \emph{left nilpotent} if $X^n=0$ for some $n$.
Each $X^n$ is a left ideal \cite[Proposition~12.2.2]{CedoVendraminBook}.
We shall also use the standard fact that
\begin{equation}\label{eq:X2ideal}
X^2=X*X\trianglelefteq X
\end{equation}
and that $X/X^2$ is the largest trivial quotient
\cite[Proposition~13.1.3]{CedoVendraminBook}.

A skew brace is of \emph{nilpotent type} if its additive group is
nilpotent.  The following theorem will be used repeatedly.

\begin{theorem}[Ced\'o--Smoktunowicz--Vendramin
{\cite[Theorem~4.8]{CSV}}]\label{thm:csv}
A finite skew brace of nilpotent type is left nilpotent if and only if
its multiplicative group is nilpotent.
\end{theorem}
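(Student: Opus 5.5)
The argument runs through the action of $A=(X,\cdot)$ on $G=(X,+)$ via $\lambda$ and the semidirect product $H=G\rtimes\Lambda$, where $\Lambda=\lambda(A)$. For the direction ``left nilpotent $\Rightarrow$ $A$ nilpotent'', we use that $X^{n+1}=[G,{}_n\Lambda]$. If $X^{m}=0$, then $\Lambda$ acts on $G$ stabilizing a normal series of $G$, since each $X^n$ is a left ideal and, by the displayed identity, is the repeated commutator. This series can be refined to a $\Lambda$-invariant chief-type series of $G$ on whose factors $\Lambda$ acts trivially. Here we use that $G$ is nilpotent: intersect the series $X^n$ with the lower central series of $G$ to obtain a $\Lambda$-invariant normal series of $G$ that is central in $G$ and on whose factors $\Lambda$ acts trivially.

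By the classical Kaloujnine/Hall argument, the stability group of such a series is nilpotent. Hence $\Lambda$ is nilpotent, and in fact $H$ is nilpotent, because $G\rtimes\Lambda$ stabilizes a central series of $G$ and $\Lambda$ is nilpotent. We then recover $A$. The map $A\to H$, $a\mapsto(a,\lambda_a)$, is an injective group homomorphism; this is the standard embedding of the multiplicative group into the holomorph, since $a\cdot b=a+\lambda_a(b)$. A subgroup of a nilpotent group is nilpotent, so $A$ is nilpotent.

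For the converse, assume $A$ is nilpotent, so $\Lambda$ is nilpotent. The image $\{(a,\lambda_a)\}$ of $A$ in $H=G\rtimes\Lambda$ is a nilpotent subgroup $\tilde A$ with $G\tilde A=H$, because its projection to $\Lambda$ is onto. We want $[G,{}_n\Lambda]=1$ for large $n$. Work prime by prime. Decompose $G=\prod_p G_p$ into Sylow subgroups, each characteristic and hence $\Lambda$-invariant. Since $X^{n}$ is built from commutators, it suffices to show that the Sylow $p'$-part of $\Lambda$ centralizes $G_p$ and that the $p$-part acts nilpotently.

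The $p$-part acts nilpotently because a $p$-group acting on a $p$-group is always a nilpotent action: $G_p\rtimes P$ is a $p$-group. The main obstacle is showing that the $p'$-part $\Lambda_{p'}$ centralizes $G_p$. To do this, project $\tilde A$ to the affine group $G_p\rtimes\Lambda$ acting on $G_p$. Its image is nilpotent and, since $a\mapsto a_p$ is surjective, it acts transitively on $G_p$. The $p'$-Hall subgroup $K$ of this nilpotent group commutes with its Sylow $p$-subgroup $S$. Because the orbit of $0$ has $p$-power size $|G_p|$, the Sylow $p$-subgroup $S$ is already transitive, so $K\le S\cdot\mathrm{Stab}(0)$. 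Using $[K,S]=1$ and coprimality, $K$ fixes $0$. So $K$ lies in $\Lambda$ and commutes with a transitive group $S$ of affine maps. Then $K$ fixes every point $s(0)$, hence centralizes $G_p$. Since the $p'$-part of $\Lambda$ is the image of $K$, this completes the argument.
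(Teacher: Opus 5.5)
Your proof is correct. The paper gives no proof of this statement; it quotes it from \cite{CSV}. Your argument, built on the embedding $a\mapsto(a,\lambda_a)$ of $A$ into $G\rtimes\Lambda$, is therefore a genuinely independent route, and it closely parallels the paper's own machinery in Section~\ref{sec:ideality}.

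\textbf{Forward direction.} For left-nilpotent $X$ we have $L=0$, and then Lemma~\ref{lem:induced-p-action}, Proposition~\ref{prop:delta-product} and Theorem~\ref{thm:core-nilpotent} already yield your forward direction. The difference is how nilpotency of $G\rtimes\Lambda$ is obtained. The paper uses coprime action: a $p'$-element $\alpha$ with $[G_p,{}_m\alpha]=1$ centralizes $G_p$. You instead refine the left series against the lower central series and invoke Kaloujnine's theorem and the hypercentre. The coprime route is shorter.

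\textbf{Converse.} This is the transitivity argument of Corollary~\ref{cor:regularity} and Proposition~\ref{prop:local-affine}, run the other way. There, nilpotency of the affine image is known and regularity is the issue. Here, nilpotency of $A$ is assumed, and transitivity of the Sylow $p$-subgroup forces the $p'$-part of $\Lambda$ to act trivially on $G_p$.

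\textbf{What your approach buys.} Theorem~\ref{thm:csv}, and hence Theorem~\ref{thm:twodesc} and the later applications, would no longer depend on an external source.

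\textbf{Two points to state more carefully.}
First, the $X^n$ are only subnormal in $G$, so the refinement must be $\gamma_{i+1}(G)\bigl(\gamma_i(G)\cap X^n\bigr)$ rather than a plain intersection.
\begin{itemize}
\item These subgroups are normal in $G$ because they lie between $\gamma_{i+1}(G)$ and $\gamma_i(G)$.
\item The inclusion $[\gamma_i(G)\cap X^n,\Lambda]\le\gamma_i(G)\cap X^{n+1}$ shows that $\Lambda$ acts trivially on the factors.
\item Nilpotency of $H$ then follows from $G\le Z_\infty(H)$ together with $H/G\cong\Lambda$ being nilpotent.
\end{itemize}
Second, the step ``$K$ fixes $0$'' is best justified by an orbit count. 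Since $S$ is transitive and centralizes $K$, it permutes the $K$-orbits transitively, so all $K$-orbits have a common size. That size divides both $|G_p|$ and $|K|$, hence equals $1$. This shows directly that $K$ acts trivially on $G_p$, so the detour through $S\cdot\mathrm{Stab}(0)$ is unnecessary.
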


Following Konovalov--Smoktunowicz--Vendramin \cite[\S5]{KSV} (see also
the erratum \cite{KSVErratum}), define
\[
D_0(X)=X,\qquad D_{n+1}(X)=D_n(X)*D_n(X).
\]
To distinguish this notion from other notions of solvability for skew
braces, we call $X$ \emph{KSV-solvable} if $D_n(X)=0$ for some $n$.
By \eqref{eq:X2ideal}, every $D_n(X)$ is a subskew brace. We call a
skew brace $C$ \emph{perfect} if $C*C=C$.

We recall the group-action result used below. Let a finite group $B$
act by automorphisms on a finite group $H$. Working in $H\rtimes B$,
put
$
[H,B]=\langle h^{-1}h^b:h\in H,\ b\in B\rangle
$
and define $[H,{}_1B]=[H,B]$ and
$[H,{}_{n+1}B]=[[H,{}_nB],B]$.

\begin{theorem}[Isaacs--Meierfrankenfeld]\label{thm:IM}
The subgroups $[H,{}_nB]$ are $B$-invariant and subnormal in $H$.
Their final term $K$ satisfies $[K,B]=K$. Moreover, $K$ is normal in
$H$ if and only if
$
\gamma_\infty([H,B])\leq K,
$
where $\gamma_\infty([H,B])$ denotes the nilpotent residual of
$[H,B]$.
\end{theorem}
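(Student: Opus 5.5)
We prove the three assertions of Theorem~\ref{thm:IM} in turn, working in the semidirect product $S=H\rtimes B$ and writing $H_n=[H,{}_nB]$.

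\emph{Invariance and subnormality.} We induct on $n$. Suppose $U\le H$ is $B$-invariant. For $h\in U$ and $b,c\in B$ we have
\[
(h^{-1}h^b)^c=(h^c)^{-1}(h^c)^{c^{-1}bc}.
\]
This is again a generator of $[U,B]$, so $[U,B]$ is $B$-invariant. The group $H_0=H$ is $B$-invariant, so every $H_n$ is.

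For subnormality, let $U$ be $B$-invariant. Then $U$ is normalized by $B$, and $[U,B]$ is normal in $\langle U,B\rangle=UB$, since $UB$ normalizes the commutator subgroup $[U,B]$ in the standard way. Hence $[U,B]=[U,B]UB\cap U$ is normal in $U$, because $[U,B]\le U$. In particular $H_{n+1}\trianglelefteq H_n$, so each $H_n$ is subnormal in $H$. Finiteness forces the chain to stabilize at some $K=H_m$, and then $[K,B]=H_{m+1}=K$.

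\emph{The normality criterion, sufficiency.} The key observation is that $H_1=[H,B]$ is normal in $S$: it is normalized by $B$, and by $H$ because $[H,B]\trianglelefteq HB$. Every $H_n$ with $n\ge1$ sits inside $M:=H_1$.

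Assume $\gamma_\infty(M)\le K$. Then $M/\gamma_\infty(M)$ is nilpotent. Since $K$ is $B$-invariant and $K\trianglelefteq\trianglelefteq H$, I would pass to the group $MB$. Set $\bar M=M/\gamma_\infty(M)$, on which $B$ acts since $\gamma_\infty(M)$ is characteristic in $M$. The image $\bar K$ satisfies $[\bar K,B]=\bar K$.

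The aim is to show $\bar K$ is normal in $\bar M$. I would do this by showing $\bar K$ equals $[\bar M,{}_\infty B]$ and arguing inside the nilpotent group $\bar M\rtimes B$ using central-series/commutator calculus. Alternatively, I would simply cite the cited theorem of \cite{IM}. Honestly, the cleanest path for the paper is to quote \cite{IM} for the normality criterion, since reproving it in full is genuinely delicate.

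\emph{The main obstacle, and necessity.} The main obstacle is showing that $K$ is normal in $M$, and hence in $H$, modulo $\gamma_\infty(M)$. For this I would try the Isaacs--Meierfrankenfeld argument: show that $K\gamma_\infty(M)/\gamma_\infty(M)$ is normal in the nilpotent quotient by proving that $[\bar M,\bar K]\le\bar K$ via three-subgroup-lemma manipulations with $\bar K=[\bar K,B]$. Then I would lift using $\gamma_\infty(M)\le K$.

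Conversely, if $K\trianglelefteq H$, then $B$ acts on $H/K$ with $[H/K,{}_nB]=1$ for large $n$. By the standard stability-group result (Hall/Kaluzhnin), $[H,B]K/K$ is nilpotent. Hence $\gamma_\infty(M)\le K$.
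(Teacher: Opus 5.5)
The paper gives no argument here beyond citing \cite[Theorems~D and E]{IM}. Your proofs of $B$-invariance, subnormality and $[K,B]=K$ are correct, and falling back on \cite{IM} for the normality criterion matches what the paper does. The problems are in the two parts of the criterion that you argue yourself.

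\textbf{Sufficiency.} Your sketch aims at $\bar K\trianglelefteq\bar M$ and then asserts that $K$ is normal ``in $M$, and hence in $H$''. That implication is false. Since $[U,B]\trianglelefteq U$ for every $B$-invariant $U$ (as you showed), $K$ is automatically normal in $M$ whenever $K=[M,B]$. Yet in Example~\ref{ex:36} the subgroup $K=X^3=[X^2,\Lambda]$ is not normal in $(X,+)$. Elements of $H$ act on $M$ by automorphisms that need not be inner, and controlling them is the whole content of this direction. So an argument that only yields $[\bar M,\bar K]\le\bar K$ cannot suffice. One workable route:
\begin{enumerate}
\item Pass to $H/\gamma_\infty(M)$, so that $M$ is nilpotent.
\item Split $K=\prod_pK_p$ along the Sylow subgroups $M_p$.
\item Show $K_p=[M_p,O^p(\bar B)]$, where $\bar B$ is the image of $B$ in $\Aut(M_p)$.
\item Use $[H,B]=M$ to see that each $h\in H$ conjugates $O^p(\bar B)$ into $O^p(\bar B\,\mathrm{Inn}(M_p))$. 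The commutator of the latter with $M_p$ is still $K_p$, by Schur--Zassenhaus conjugacy.
\end{enumerate}

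\textbf{Necessity.} The Hall--Kaluzhnin theorem does not give what you claim. It says that the automorphism group $\bar B$ induced by $B$ on $H/K$ is nilpotent, since $\bar B$ stabilizes the series $[H/K,{}_iB]$. Only Hall's version applies here, because this series is subnormal, not normal. The theorem says nothing directly about the subgroup $[H,B]/K$ of $H/K$. The missing step is as follows. In $Y=(H/K)\rtimes\bar B$ one has $[U\bar B,\bar B]\le[U,\bar B]\bar B$ for every $\bar B$-invariant $U$, using $[ub,c]=[u,c]^b[b,c]$. Hence $[Y,{}_m\bar B]\le[H/K,{}_m\bar B]\,\bar B=\bar B$, which forces $\bar B$ to be subnormal in $Y$. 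Being nilpotent and subnormal, $\bar B$ lies in the Fitting subgroup $F(Y)$. Therefore $[H,B]/K=[H/K,\bar B]\le F(Y)$ is nilpotent, which gives $\gamma_\infty([H,B])\le K$.
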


\begin{proof}
See \cite[Theorems~D and E]{IM}.
\end{proof}

\section{The left series as a commutator series}\label{sec:dict}

Let
\[
G=(X,+),\qquad A=(X,\cdot),\qquad
\Lambda=\lambda(A)\leq\Aut(G).
\]
We regard $A$ as an operator group on $G$ via $\lambda$, so
commutators with $A$ are taken in $G\rtimes_\lambda A$. For $x\in G$
and $a\in A$, write
$
[x,a]=-x+\lambda_a(x).
$
Since $\ker\lambda$ centralizes $G$, commutators with $A$ and with
$\Lambda$ determine the same subgroups of $G$. In particular,
$[G,{}_nA]=[G,{}_n\Lambda]$ for every $n\geq1$.

\begin{lemma}\label{lem:dict}
For every subgroup $Y\leq G$,
$
X*Y=[Y,A]=[Y,\Lambda].
$
\end{lemma}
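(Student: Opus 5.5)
The identity follows by matching generators. By definition, $X*Y$ is the subgroup of $(X,+)$ generated by the elements $a*y=\lambda_a(y)-y$ with $a\in X$ and $y\in Y$. Likewise $[Y,A]$ is the subgroup generated by the commutators $[y,a]=-y+\lambda_a(y)$. So it suffices to show that each generating set lies in the subgroup generated by the other.

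The two expressions differ only in the order of the summands. Conjugating in $(X,+)$ gives
\[
\lambda_a(y)-y \;=\; y+\bigl(-y+\lambda_a(y)\bigr)-y \;=\; y+[y,a]-y .
\]
Since $y\in Y$, each generator of $X*Y$ is a $Y$-conjugate of a generator of $[Y,A]$. Conversely,
\[
[y,a]=-y+\bigl(\lambda_a(y)-y\bigr)+y .
\]
So each generator of $[Y,A]$ is a $Y$-conjugate of a generator of $X*Y$.

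The one point needing care is that these conjugations stay inside the subgroups in question. For this I will show that $[Y,A]$ is normalized by $Y$. The standard identity $[y_1y_2,a]=[y_1,a]^{y_2}[y_2,a]$, written additively in $G\rtimes_\lambda A$, gives
\[
[y_1,a]^{y_2}=[y_1+y_2,a]-[y_2,a]\in[Y,A].
\]
Since $Y$ is a subgroup, $y_1+y_2\in Y$, so the right-hand side lies in $[Y,A]$. Hence $[Y,A]$ is $Y$-invariant under conjugation, and the first display gives $X*Y\le[Y,A]$.

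The same argument works for $X*Y$, using $a*(y_1+y_2)=a*y_1+y_1+a*y_2-y_1$. This is the usual expansion of $\lambda_a(y_1+y_2)-(y_1+y_2)$. It shows that $y_1+(a*y_2)-y_1$ lies in $X*Y$, so $X*Y$ is closed under conjugation by $Y$. The second display then gives $[Y,A]\le X*Y$.

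Finally, $[Y,A]=[Y,\Lambda]$ because the commutator $[y,a]$ depends only on $\lambda_a\in\Lambda$, and every element of $\Lambda$ arises as some $\lambda_a$. This is the remark made just before the statement.
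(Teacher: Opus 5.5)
Your argument is correct, but it takes a longer route than the paper's. The paper avoids conjugation altogether. Since $\lambda_a$ is an automorphism of $G$,
\[
a*(-y)=\lambda_a(-y)+y=-\bigl(-y+\lambda_a(y)\bigr)=-[y,a],
\]
and as $y$ runs over $Y$ so does $-y$. Hence the generators of $X*Y$ are exactly the negatives of the generators of $[Y,A]$, and a set and its negatives generate the same subgroup. You instead pair $a*y$ with $[y,a]$ at the same $y$. The two then differ by conjugation by an element of $Y$, so you must also show that both subgroups are normalized by $Y$. You do this with the expansions $[y_1+y_2,a]=(-y_2+[y_1,a]+y_2)+[y_2,a]$ and $a*(y_1+y_2)=a*y_1+y_1+a*y_2-y_1$. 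Both identities check out, so both inclusions follow. Your route yields as a by-product that $X*Y=[Y,A]$ is normalized by $Y$, the familiar fact that $[H,B]$ is normalized by $H$. The paper's substitution $y\mapsto -y$ gives the equality in one line with no normality input. Your closing remark, that $[Y,A]=[Y,\Lambda]$ because $[y,a]$ depends only on $\lambda_a$, matches the reasoning the paper gives just before the lemma.
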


\begin{proof}
Let $a\in X$ and $y\in Y$. Then
\[
a*(-y)
=\lambda_a(-y)+y
=-\lambda_a(y)+y
=-\bigl(-y+\lambda_a(y)\bigr)
=-[y,a].
\]
As $y$ runs through $Y$, so does $-y$, and the two sets of generators
give the same subgroup.
\end{proof}

\begin{proposition}\label{prop:series}
For every $n\geq1$,
$
X^{n+1}
=[G,\underbrace{\Lambda,\ldots,\Lambda}_{n}]
=[G,{}_nA].
$
Moreover, every $X^n$ is a left ideal of $X$,
$X^2\trianglelefteq G$, and
$X^{n+1}\trianglelefteq(X^n,+)$ for every $n\geq1$.
Thus every term of the left series is subnormal in $G$.
\end{proposition}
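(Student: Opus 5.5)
The plan is to argue by induction on $n$, using Lemma~\ref{lem:dict} to convert each star product into a commutator with $\Lambda$. For $n=1$, Lemma~\ref{lem:dict} applied to $Y=G=X^1$ gives $X^2=X*X=[G,\Lambda]=[G,A]$. For the inductive step, assume $X^{n+1}=[G,{}_n\Lambda]$. Since $X^{n+1}$ is a subgroup of $G$, Lemma~\ref{lem:dict} with $Y=X^{n+1}$ gives
\[
X^{n+2}=X*X^{n+1}=[X^{n+1},\Lambda]=[[G,{}_n\Lambda],\Lambda]=[G,{}_{n+1}\Lambda].
\]
The equality with $[G,{}_nA]$ holds because $\ker\lambda$ centralizes $G$, as noted before the lemma. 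This handles the displayed formula.

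Next I would show that each $X^n$ is a left ideal, meaning that it is invariant under every $\lambda_a$. This follows from Theorem~\ref{thm:IM}, which says the subgroups $[G,{}_n\Lambda]$ are $\Lambda$-invariant. Directly, if $K$ is a $\Lambda$-invariant subgroup, then $[K,\Lambda]$ is $\Lambda$-invariant, because
\[
\mu\bigl(-k+\nu(k)\bigr)=-\mu(k)+\mu\nu\mu^{-1}\bigl(\mu(k)\bigr),
\]
and $\mu(k)\in K$. One could also simply cite \cite[Proposition~12.2.2]{CedoVendraminBook}.

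The normality claims use the following standard group-theoretic fact: if $K\le G$ is $\Lambda$-invariant, then $[K,\Lambda]\trianglelefteq K$. To verify it, take $k,h\in K$ and $\mu\in\Lambda$. Then
\[
-h+(-k+\mu(k))+h=-(k+h)+\mu(k+h)-\bigl(-h+\mu(h)\bigr),
\]
which one checks directly using $\mu(k+h)=\mu(k)+\mu(h)$. The right-hand side is a product of generators of $[K,\Lambda]$. Applying this fact with $K=X^n$ gives $X^{n+1}\trianglelefteq(X^n,+)$, and the case $K=G$ gives $X^2\trianglelefteq G$. Alternatively, $X^2\trianglelefteq G$ is part of \eqref{eq:X2ideal}.

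Subnormality then follows immediately from the chain $G=X^1\trianglerighteq X^2\trianglerighteq\cdots\trianglerighteq X^n$, or it can be read off from Theorem~\ref{thm:IM}. The only step that needs real care is the conjugation identity above, written additively in a possibly nonabelian group $G$. Everything else is bookkeeping with Lemma~\ref{lem:dict}.
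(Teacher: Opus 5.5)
Your proof is correct and follows the paper's route: you obtain the displayed formula by iterating Lemma~\ref{lem:dict}, starting from $X^1=G$. The only difference is in the invariance and normality claims, which the paper attributes to standard properties of repeated commutators (Theorem~\ref{thm:IM}); you instead check them directly with your two identities, which makes the argument self-contained.
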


\begin{proof}
By Lemma~\ref{lem:dict},
$
X^{n+1}=X*X^n=[X^n,A]=[X^n,\Lambda].
$
Starting with $X^1=G$ and applying this identity repeatedly gives the
displayed commutator formula. The remaining assertions are the
standard properties of repeated commutators recalled in
Section~\ref{sec:prelim}. Since the action of $A$ on $G$ is the
$\lambda$-action, $A$-invariance, $\Lambda$-invariance, and
$\lambda$-invariance are equivalent.
\end{proof}
\section{Properties of the final term of the left series}\label{sec:final}

\begin{definition}\label{def:final}
The final term of the left series of $X$ is
\[
L_\infty(X)=\bigcap_{n\ge1}X^n.
\]
\end{definition}

Since $X$ is finite, its left series eventually reaches $L_\infty(X)$.
Thus $X$ is left nilpotent if and only if $L_\infty(X)=0$.

\begin{proposition}\label{prop:idempotent}
The subgroup $L_\infty(X)$ is a left ideal of $X$ and
$
X*L_\infty(X)=L_\infty(X).
$
Moreover, $L_\infty(X)$ is the largest subgroup $Y\le(X,+)$ satisfying
$X*Y=Y$.  In particular, every subskew brace $C\le X$ satisfying
$C*C=C$ is contained in $L_\infty(X)$.
\end{proposition}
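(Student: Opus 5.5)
The plan is to work entirely through the commutator description of Section~\ref{sec:dict}. By finiteness the left series stabilizes: there is $m$ with $X^n=X^m$ for all $n\ge m$, so $L_\infty(X)=X^m$. By Proposition~\ref{prop:series}, $X^m$ is a left ideal of $X$. Moreover
\[
X*L_\infty(X)=X*X^m=X^{m+1}=X^m=L_\infty(X).
\]
Equivalently, Theorem~\ref{thm:IM} gives $[K,A]=K$ for the final commutator $K=[G,{}_\infty A]$, and Lemma~\ref{lem:dict} converts this into $X*K=K$. This settles the first two assertions.

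For maximality, let $Y\le(X,+)$ satisfy $X*Y=Y$. We show by induction on $n$ that $Y\le X^n$ for every $n$. The base case is $Y\le X=X^1$. For the inductive step, suppose $Y\le X^n$. The star product of subsets is monotone in the second argument, because $X*S$ is generated by the elements $x*s$ with $s\in S$. Hence
\[
Y=X*Y\subseteq X*X^n=X^{n+1}.
\]
Therefore $Y\le\bigcap_n X^n=L_\infty(X)$. Together with the first part, this shows that $L_\infty(X)$ is the largest such subgroup. Note that $Y$ need not be $\lambda$-invariant for this argument, and only monotonicity is used.

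For the final claim, let $C\le X$ be a subskew brace with $C*C=C$. Then $C=C*C\subseteq X*C$, again by monotonicity, now in the first argument. This alone does not give $X*C=C$, so I would instead run the same induction directly: $C\le X^n$ implies
\[
C=C*C\subseteq X*X^n=X^{n+1},
\]
using monotonicity in both arguments. Hence $C\le L_\infty(X)$.

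The only potential subtlety is this last step. The characterization via $X*Y=Y$ does not directly apply, since $C$ need not be $\lambda_X$-invariant. The inductive containment argument sidesteps this, so I expect no real obstacle beyond checking that $S*T$ is monotone in each argument, which is immediate from its definition as a subgroup generated by the elements $s*t$.
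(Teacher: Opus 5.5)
Your proof is correct and follows essentially the same route as the paper. Stabilization of the finite left series gives $X*L_\infty(X)=X^{m+1}=X^m=L_\infty(X)$. The same two monotonicity inductions, $Y=X*Y\le X*X^n$ and $C=C*C\le X*X^n$, give maximality and the final containment.
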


\begin{proof}
Since $X$ is finite, there is an $m$ such that
$X^m=X^{m+1}=L_\infty(X)$. By Proposition~\ref{prop:series}, $X^m$ is
a left ideal of $X$ and
$
X*L_\infty(X)=X*X^m=X^{m+1}=X^m=L_\infty(X).
$

Let $Y\leq(X,+)$ satisfy $X*Y=Y$. We prove by induction that
$Y\leq X^n$ for every $n\geq1$. This is clear for $n=1$. If
$Y\leq X^n$, then
$
Y=X*Y\leq X*X^n=X^{n+1}.
$
Therefore $Y\leq L_\infty(X)$.

Finally, let $C\leq X$ be a subskew brace satisfying $C*C=C$. Again,
$C\leq X^1$, and if $C\leq X^n$, then
$
C=C*C\leq X*X^n=X^{n+1}.
$
Hence $C\leq X^n$ for every $n$, and so $C\leq L_\infty(X)$.
\end{proof}

\begin{theorem}\label{thm:addnormal}
Let $\gamma_\infty(X^2,+)$ denote the nilpotent residual of the group
$(X^2,+)$.  Then $L_\infty(X)$ is normal in $(X,+)$ if and only if
$
\gamma_\infty(X^2,+)\le L_\infty(X).
$
\end{theorem}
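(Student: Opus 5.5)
The plan is to deduce the statement directly from Theorem~\ref{thm:IM}, applied to the operator group $B=A$ (equivalently $\Lambda$) acting on $H=G=(X,+)$ via $\lambda$. The work consists in matching each object in that theorem with its skew-brace counterpart.

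First, by Lemma~\ref{lem:dict} with $Y=G$,
\[
[G,A]=X*X=X^2 .
\]
Hence $[H,B]$ in Theorem~\ref{thm:IM} is exactly $X^2$, and its nilpotent residual $\gamma_\infty([H,B])$ is $\gamma_\infty(X^2,+)$. Next, Proposition~\ref{prop:series} gives $X^{n+1}=[G,{}_nA]$ for every $n\ge1$. The repeated commutators $[G,{}_nA]$ are therefore precisely the terms $X^{n+1}$ of the left series. Since $X$ is finite, both series stabilize, so the final term $K$ of the commutator series coincides with $L_\infty(X)=\bigcap_n X^n$. The term $X^1=G$ does not affect this intersection, because the series is descending.

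With these identifications, the last assertion of Theorem~\ref{thm:IM} reads: $L_\infty(X)$ is normal in $G=(X,+)$ if and only if $\gamma_\infty(X^2,+)\le L_\infty(X)$. This is the claim.

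The only point needing care, and the main potential obstacle, is compatibility of conventions. The paper defines $[x,a]=-x+\lambda_a(x)$ additively, whereas \cite{IM} uses $h^{-1}h^b$ in $H\rtimes B$ with a right action. If \cite{IM} is formulated for right actions, one passes to the right action $h^a=\lambda_{a^{-1}}(h)$, or simply to $\Lambda$ acting on $G$. Since $\Lambda$ is a group, the generated subgroups $[Y,\Lambda]$ are unchanged under inversion of operators. Likewise, replacing $h^{-1}h^b$ by its inverse or by a conjugate formulation does not change the subgroup generated. The remark before Lemma~\ref{lem:dict}, that $\ker\lambda$ acts trivially, ensures that using $A$ or $\Lambda$ gives the same subgroups. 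With this checked, the proof is a direct translation.
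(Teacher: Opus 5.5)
Your proof is correct and is essentially the paper's own argument. It identifies $[G,\Lambda]=X^2$ and the final commutator of $G$ by $\Lambda$ with $L_\infty(X)$ via Proposition~\ref{prop:series}, then invokes the last assertion of Theorem~\ref{thm:IM}; your check of left- versus right-action commutator conventions is a reasonable extra verification that the paper leaves implicit.
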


\begin{proof}
By Proposition~\ref{prop:series}, $L_\infty(X)$ is the final
commutator of $G$ by $\Lambda$. Since $[G,\Lambda]=X^2$, the result
follows from Theorem~\ref{thm:IM}.
\end{proof}

\begin{corollary}\label{cor:addnormal}
If $(X^2,+)$ is nilpotent, in particular if $X$ is of nilpotent type,
then $L_\infty(X)$ is a strong left ideal.
\end{corollary}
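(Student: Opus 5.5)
The corollary follows directly from Theorem~\ref{thm:addnormal} together with Proposition~\ref{prop:idempotent}.

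First, suppose $(X^2,+)$ is nilpotent. Then its nilpotent residual $\gamma_\infty(X^2,+)$ is trivial, so the inclusion $\gamma_\infty(X^2,+)\le L_\infty(X)$ holds automatically. Theorem~\ref{thm:addnormal} then gives $L_\infty(X)\trianglelefteq(X,+)$.

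By Proposition~\ref{prop:idempotent}, $L_\infty(X)$ is already a left ideal. Being also normal in $(X,+)$, it is a strong left ideal by definition.

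For the parenthetical case, let $X$ be of nilpotent type, so $(X,+)$ is nilpotent. By Proposition~\ref{prop:series} (or simply because $X^2$ is a left ideal), $X^2$ is a subgroup of $(X,+)$. A subgroup of a nilpotent group is nilpotent, so $(X^2,+)$ is nilpotent and the first case applies.

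No step here is a real obstacle. The only point to check is that "nilpotent residual trivial" is exactly what nilpotency of a finite group means: the lower central series of a finite group stabilises at $\gamma_\infty$, and this term is $0$ precisely when the group is nilpotent.
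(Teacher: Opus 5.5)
Your proposal is correct and follows the paper's proof exactly: nilpotency of $(X^2,+)$ makes $\gamma_\infty(X^2,+)$ trivial, so Theorem~\ref{thm:addnormal} gives normality in $(X,+)$, and Proposition~\ref{prop:idempotent} supplies the left-ideal property. Your added justification of the nilpotent-type case, that $X^2$ is a subgroup of the nilpotent group $(X,+)$, spells out a step the paper leaves implicit.
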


\begin{proof}
Since $(X^2,+)$ is nilpotent, $\gamma_\infty(X^2,+)=0$. Hence
Theorem~\ref{thm:addnormal} shows that $L_\infty(X)$ is normal in
$(X,+)$. The result now follows from Proposition~\ref{prop:idempotent}.
\end{proof}

\begin{example}\label{ex:18}
Let $X=\SB{18}{20}$.  Then
$(X,+)\cong (X,\cdot)\cong S_3\times C_3$, and its left series has
sizes $18>9>3$,
with $L_\infty(X)=X^3$ and $(X^2,+)\cong C_3\times C_3$.  The subgroup
$L_\infty(X)$ is $\lambda$-invariant and normal in $(X,+)$, but it is
not normal in $(X,\cdot)$.  Hence $L_\infty(X)$ is not an ideal of
$X$.  Thus the final term of the left series of a finite skew brace
need not be an ideal.
\end{example}

\begin{example}\label{ex:36}
Let $X=\SB{36}{191}$.  The left series of $X$ has orders
$36>18>3$,
and $L_\infty(X)=X^3$ has order $3$.  Moreover, $(X^2,+)$ is
non-nilpotent of order $18$ and
$\bigl|\gamma_\infty(X^2,+)\bigr|=9$.  Consequently,
$\gamma_\infty(X^2,+)\nleq L_\infty(X)$,
and Theorem~\ref{thm:addnormal} shows that $L_\infty(X)$ is not normal
in $(X,+)$.  Thus the final term of the left series of a finite skew
brace need not be an additive normal subgroup.
\end{example}

\section{Properties of the left-nilpotent residual}\label{sec:res}

Example~\ref{ex:18} shows that $L_\infty(X)$ need not be an ideal and
therefore need not define a quotient skew brace. This leads to the
following definition.

\begin{definition}
The \emph{left-nilpotent residual} of $X$ is
\[
\RL(X)=\bigcap\{I:I\text{ is an ideal of }X
\text{ and }X/I\text{ is left nilpotent}\}.
\]
\end{definition}

\begin{lemma}\label{lem:quot}
Let $I$ be an ideal of $X$. Then $(X/I)^n=(X^n+I)/I$ for every
$n\geq1$. Consequently, $X/I$ is left nilpotent if and only if
$L_\infty(X)\leq I$.
\end{lemma}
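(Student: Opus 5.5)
The plan is to prove $(X/I)^n=(X^n+I)/I$ by induction on $n$, using that the natural map $\pi\colon X\to X/I$ is a skew brace homomorphism. The consequence about left nilpotency then follows from finiteness.

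For $n=1$ the identity is trivial. For the inductive step, the key observation is that $\pi$ preserves both operations, so it preserves $\lambda$ and the star product:
\[
\pi(a*b)=\pi(a)*\pi(b).
\]
For subsets $S,T\subseteq X$, $S*T$ is the additive subgroup generated by the elements $s*t$. A group homomorphism maps the subgroup generated by a set onto the subgroup generated by the image set. Hence
\[
\pi(S*T)=\pi(S)*\pi(T).
\]
Now assume $(X/I)^n=\pi(X^n)=(X^n+I)/I$; here $X^n+I$ is a subgroup because $I$ is normal in $(X,+)$. Then
\[
(X/I)^{n+1}=(X/I)*(X/I)^n=\pi(X)*\pi(X^n)=\pi(X*X^n)=\pi(X^{n+1})=(X^{n+1}+I)/I.
\]
The only point needing care is this surjectivity: the generators of $\pi(X)*\pi(X^n)$ are exactly the images of the generators $a*y$ of $X*X^n$. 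Alternatively, one can phrase the same step via Lemma~\ref{lem:dict}, as the commutator subgroup $[Y,A]$ maps onto the commutator computed in the quotient.

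For the consequence, $X$ is finite, so there is an $m$ with $X^n=L_\infty(X)$ for all $n\ge m$. Then $(X/I)^n=(L_\infty(X)+I)/I$ for every $n\ge m$, and $X/I$ is also finite.

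If $L_\infty(X)\le I$, these terms are zero, so $X/I$ is left nilpotent. Conversely, suppose $(X/I)^k=0$ for some $k$. Since the left series is descending, $(X/I)^n=0$ for all $n\ge k$. Taking $n\ge\max(k,m)$ gives $L_\infty(X)+I=I$, that is, $L_\infty(X)\le I$.

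I do not expect any real obstacle here. The only subtle point is the equality, not merely the inclusion, $\pi(S*T)=\pi(S)*\pi(T)$, and this holds because images of generating sets generate the image subgroup.
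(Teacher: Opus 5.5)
Your proposal is correct and follows essentially the same route as the paper: induction via the star-product-preserving quotient map gives $(X/I)^n=(X^n+I)/I$, and then finiteness, which makes the left series stabilize at $L_\infty(X)$, yields the equivalence. You justify the equality $\pi(S*T)=\pi(S)*\pi(T)$ by the generator argument more explicitly than the paper does, but the argument is the same.
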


\begin{proof}
Let $\rho:X\to X/I$ be the natural epimorphism. Since $\rho$ preserves
the star product, induction gives $\rho(X^n)=(X/I)^n$ for every
$n\geq1$. Since $\rho(X^n)=(X^n+I)/I$, the first assertion follows.
The quotient $X/I$ is left nilpotent if and only if $X^n\leq I$ for
some $n$. Since the left series of $X$ eventually reaches
$L_\infty(X)$, this is equivalent to $L_\infty(X)\leq I$.
\end{proof}

\begin{theorem}\label{thm:residual}
Let $X$ be a skew brace. Then the following statements hold.
\begin{enumerate}
\item
The left-nilpotent residual is the ideal generated by the final term
of the left series:
\[
\RL(X)=\Id_X\bigl(L_\infty(X)\bigr).
\]

\item
The quotient $X/\RL(X)$ is the largest left-nilpotent quotient of
$X$. More precisely, if $f:X\to Y$ is an epimorphism and $Y$ is left
nilpotent, then $\RL(X)\leq\ker f$.

\item We have
$
L_\infty(X)\leq\RL(X)\leq X^2.
$
\item The equality $\RL(X)=0$ holds if and only if $X$ is left
nilpotent, and $\RL(X)=L_\infty(X)$ holds if and only if
$L_\infty(X)$ is an ideal of $X$.
\end{enumerate}
\end{theorem}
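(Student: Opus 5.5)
The whole argument rests on Lemma~\ref{lem:quot}: for an ideal $I$ of $X$, the quotient $X/I$ is left nilpotent if and only if $L_\infty(X)\le I$. For (1), this says the family of ideals intersected in the definition of $\RL(X)$ is exactly the family of ideals containing $L_\infty(X)$. Their intersection is therefore the smallest ideal containing $L_\infty(X)$, namely $\Id_X(L_\infty(X))$. The family is nonempty because $X$ itself belongs to it. The one point to check is that an intersection of ideals is again an ideal, which holds since normality in both groups and $\lambda$-invariance pass to intersections. Hence $\Id_X(S)$ is well defined as the intersection of all ideals containing $S$, and (1) follows.

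For (2), I first note that $\RL(X)$ is itself an ideal containing $L_\infty(X)$. Lemma~\ref{lem:quot} then shows that $X/\RL(X)$ is left nilpotent. Now let $f\colon X\to Y$ be an epimorphism with $Y$ left nilpotent, and put $I=\ker f$. Then $I$ is an ideal and $X/I\cong Y$ is left nilpotent, so $I$ is one of the ideals in the defining intersection, giving $\RL(X)\le I$. For the isomorphism $X/I\cong Y$ I will use the standard first isomorphism theorem for skew braces. If I want to avoid it, I can argue directly as in the proof of Lemma~\ref{lem:quot}: since $f$ preserves $*$, induction gives $f(X^n)=Y^n$, so $f(L_\infty(X))=0$ once $n$ is large, i.e.\ $L_\infty(X)\le\ker f$, and then (1) gives $\RL(X)\le\ker f$. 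In either form, every left-nilpotent quotient of $X$ factors through $X/\RL(X)$, which makes $X/\RL(X)$ the largest such quotient.

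For (3), the inclusion $L_\infty(X)\le\Id_X(L_\infty(X))=\RL(X)$ is immediate. For the other inclusion I use \eqref{eq:X2ideal}: $X^2$ is an ideal and $X/X^2$ is trivial, so $(X/X^2)^2=0$ and $X/X^2$ is left nilpotent. Alternatively, $L_\infty(X)\le X^2$ directly, and $X^2$ is an ideal, so $\RL(X)\le X^2$ by (1).

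For (4), if $\RL(X)=0$ then $L_\infty(X)=0$, so $X$ is left nilpotent. Conversely, if $X$ is left nilpotent then $L_\infty(X)=0$, and $\{0\}$ is an ideal, so $\RL(X)=\Id_X(0)=0$. For the second equivalence, if $\RL(X)=L_\infty(X)$ then $L_\infty(X)$ is an ideal because $\RL(X)$ is. Conversely, if $L_\infty(X)$ is an ideal, it is the smallest ideal containing itself, so (1) gives $\RL(X)=L_\infty(X)$.

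None of these steps is a real obstacle; the only care needed is confirming that $\Id_X$ is well defined via intersections of ideals and that $f$ preserves the star product.
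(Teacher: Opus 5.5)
Your proof is correct and follows essentially the same route as the paper: Lemma~\ref{lem:quot} identifies the ideals in the defining intersection with the ideals containing $L_\infty(X)$, which gives (1). Parts (2)--(4) then follow from (1), from $X^2$ being an ideal with trivial quotient, and from $\RL(X)$ being an ideal. The only cosmetic difference is in (2): you obtain $\RL(X)\le\ker f$ directly from the definition of the residual, whereas the paper passes through Lemma~\ref{lem:quot} and (1), and both are immediate.
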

\begin{proof}
By Lemma~\ref{lem:quot}, the ideals $I$ for which $X/I$ is left
nilpotent are precisely the ideals containing $L_\infty(X)$. Their
intersection is therefore the smallest ideal containing
$L_\infty(X)$, proving~\textup{(1)}.

It follows that $X/\RL(X)$ is left nilpotent. If $f:X\to Y$ is an
epimorphism and $Y$ is left nilpotent, then $X/\ker f\cong Y$. Hence
Lemma~\ref{lem:quot} gives $L_\infty(X)\leq\ker f$, and~\textup{(1)}
gives $\RL(X)\leq\ker f$. This proves~\textup{(2)}.

The first inclusion in~\textup{(3)} follows from~\textup{(1)}.
Moreover, $X^2$ is an ideal and $X/X^2$ is trivial, and so the definition
of the residual gives $\RL(X)\leq X^2$.

Finally, $\RL(X)=0$ if and only if $L_\infty(X)=0$, which is
equivalent to left nilpotency. If $L_\infty(X)$ is an ideal, then
\textup{(1)} gives $\RL(X)=L_\infty(X)$. The converse follows because
$\RL(X)$ is an ideal.
\end{proof}
\begin{remark}\label{rem:quotient-class}
For a nonzero left-nilpotent skew brace $B$, let $\ell(B)$ denote its
left-nilpotency class. Put $Q=X/\RL(X)$ and suppose that $Q\neq0$.
By \cite[Theorems~3.2 and~3.5]{EGGK}, the Sylow subgroups of
$(Q,\cdot)$ are the multiplicative groups of the Sylow subskew braces
of $Q$, and
\[
\ell(Q)\leq
\max\{\ell(P):P\text{ is a Sylow subskew brace of }Q\}+1.
\]
\end{remark}

\begin{proposition}\label{prop:epi}
Let $f:X\to Y$ be an epimorphism. Then
$
f\bigl(L_\infty(X)\bigr)=L_\infty(Y)$ and 
$f(\RL(X))=\RL(Y).
$
Consequently, if $I$ is an ideal of $X$, then
$\RL(X/I)=(\RL(X)+I)/I$. Both $L_\infty(X)$ and $\RL(X)$ are
invariant under every skew brace automorphism of $X$.
\end{proposition}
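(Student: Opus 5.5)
The plan is to first show $f(X^n)=Y^n$ for every $n$ by induction, then pass to final terms, and then push the ideal closure through $f$.

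\emph{Step 1: the left series.} For $n=1$ we have $f(X)=Y$ by surjectivity. Since $f$ is a skew brace homomorphism, it preserves the star product, $f(a*b)=f(a)*f(b)$. It is also additive, so it maps the subgroup generated by a set onto the subgroup generated by the image set. Hence
\[
f(X*X^n)=f(X)*f(X^n)=Y*Y^n=Y^{n+1}.
\]
This is the same computation used in Lemma~\ref{lem:quot}. Since $X$ and $Y$ are finite, choose $m$ with $X^m=L_\infty(X)$ and $Y^m=L_\infty(Y)$; both series are descending, so one large $m$ serves for both. Then $f(L_\infty(X))=f(X^m)=Y^m=L_\infty(Y)$.

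\emph{Step 2: the residual.} By Theorem~\ref{thm:residual}(1), it suffices to show
\[
f\bigl(\Id_X(S)\bigr)=\Id_Y\bigl(f(S)\bigr)
\]
for a subset $S$. I expect this to be the only real point, and it rests on two facts about epimorphisms.

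First, the image of an ideal under an epimorphism is an ideal. Additive normality and multiplicative normality transfer because $f$ is surjective in both structures. $\lambda$-invariance transfers because $f(\lambda_a(x))=\lambda_{f(a)}(f(x))$ and every element of $Y$ has the form $f(a)$. This gives $\Id_Y(f(S))\le f(\Id_X(S))$.

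Second, the preimage of an ideal is an ideal. So $f^{-1}(\Id_Y(f(S)))$ is an ideal of $X$ containing $S$, hence it contains $\Id_X(S)$. Applying $f$ gives the reverse inclusion.

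With $S=L_\infty(X)$ and Step 1, we get
\[
f(\RL(X))=\Id_Y\bigl(L_\infty(Y)\bigr)=\RL(Y).
\]

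\emph{Step 3: consequences.} For an ideal $I$, apply the result to the natural map $X\to X/I$. This yields $\RL(X/I)=(\RL(X)+I)/I$, since the image of a subset $T$ is $(T+I)/I$. For an automorphism $\varphi$ of $X$, the statement with $f=\varphi$ gives
\[
\varphi(L_\infty(X))=L_\infty(X)\quad\text{and}\quad\varphi(\RL(X))=\RL(X).
\]
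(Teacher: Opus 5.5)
Your proposal is correct and follows the paper's proof: induction gives $f(X^n)=Y^n$, you pass to final terms, push the ideal closure through $f$ using Theorem~\ref{thm:residual}(1), and then apply the result to the quotient map and to automorphisms. Your check that $f(\Id_X(S))=\Id_Y(f(S))$, using that images of ideals under epimorphisms and preimages of ideals are ideals, makes explicit a step the paper leaves implicit.
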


\begin{proof}
Since $f$ is surjective and preserves the star product, induction
gives $f(X^n)=Y^n$ for every $n$. Passing to the final terms gives
$f(L_\infty(X))=L_\infty(Y)$. By
Theorem~\ref{thm:residual}\textup{(1)},
\[
f\bigl(\RL(X)\bigr)
=f\bigl(\Id_X(L_\infty(X))\bigr)
=\Id_Y(L_\infty(Y))
=\RL(Y).
\]
The remaining assertions follow by applying these equalities to the
natural quotient map and to automorphisms of $X$.
\end{proof}

\begin{proposition}\label{prop:prod}
For skew braces $X$ and $Y$, we have
$
L_\infty(X\times Y)=L_\infty(X)\times L_\infty(Y)$ and\\ $\RL(X\times Y)=\RL(X)\times\RL(Y).
$
\end{proposition}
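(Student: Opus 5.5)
We will prove the two equalities in order, first for the left series and then for the residual. The key observation is that in the direct product $Z=X\times Y$ both operations, and hence the lambda map and the star product, are computed componentwise. Thus for $(x,y),(x',y')\in Z$ we have
\[
(x,y)*(x',y')=(x*x',\,y*y').
\]

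For the left series, we show by induction that $Z^n=X^n\times Y^n$ for all $n\ge1$. The case $n=1$ is trivial. Assuming the claim for $n$, the generators $z*w$ of $Z^{n+1}=Z*Z^n$ are the pairs $(x*x',y*y')$ with $x\in X$, $x'\in X^n$, $y\in Y$ and $y'\in Y^n$. These lie in $X^{n+1}\times Y^{n+1}$, so $Z^{n+1}\le X^{n+1}\times Y^{n+1}$.

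For the reverse inclusion we use the identity $0*b=0$ and $a*0=0$ (since $\lambda_0=\mathrm{id}$ and $\lambda_a(0)=0$). This gives $(x,0)*(x',0)=(x*x',0)$, so $Z^{n+1}$ contains $X^{n+1}\times 0$, and symmetrically $0\times Y^{n+1}$. Since these two subgroups generate $X^{n+1}\times Y^{n+1}$ additively, equality follows. Finiteness lets us take $n$ large enough that all three series have stabilized, which yields $L_\infty(Z)=L_\infty(X)\times L_\infty(Y)$.

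For the residual we use Theorem~\ref{thm:residual}(1): it suffices to show that
\[
\Id_Z\bigl(L_\infty(X)\times L_\infty(Y)\bigr)=\Id_X\bigl(L_\infty(X)\bigr)\times\Id_Y\bigl(L_\infty(Y)\bigr).
\]
The right-hand side is an ideal of $Z$, because normality in both groups and $\lambda$-invariance are checked componentwise. It contains $L_\infty(Z)$, which gives the inclusion $\le$.

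For $\ge$, let $J=\RL(Z)$ and consider the projection epimorphism $\pi_X:Z\to X$. By Proposition~\ref{prop:epi}, $\pi_X(J)=\RL(X)$. This alone does not give $\RL(X)\times 0\le J$, so instead we embed. Let $\iota_X:X\to Z$, $x\mapsto(x,0)$, which is an injective homomorphism whose image $X\times 0$ is an ideal of $Z$. Then $J\cap(X\times 0)$ is an ideal of $Z$ contained in $X\times 0$. Its preimage $J_X=\iota_X^{-1}(J)$ is an ideal of $X$, because any ideal of $X$ is normal and $\lambda$-invariant in $Z$ after embedding, and conversely an ideal of $Z$ inside $X\times 0$ corresponds to an ideal of $X$. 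Moreover, $J_X$ contains $L_\infty(X)$, since $L_\infty(X)\times 0\le L_\infty(Z)\le J$. Therefore $J_X\ge\Id_X(L_\infty(X))=\RL(X)$, that is, $\RL(X)\times 0\le J$. Symmetrically $0\times\RL(Y)\le J$, and summing the two gives the reverse inclusion.

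The only point needing care is this last step: the correspondence between ideals of $X$ and ideals of $Z$ contained in $X\times 0$. It is routine because conjugation and $\lambda$ by elements $(x,y)$ act on $X\times0$ only through the $x$-component.
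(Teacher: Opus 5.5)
Your proof is correct and follows the same route as the paper's. You compute the left series componentwise, obtain $\RL(X\times Y)\le\RL(X)\times\RL(Y)$ from Theorem~\ref{thm:residual}(1), and get the reverse inclusion by intersecting $\RL(X\times Y)$ with $X\times\{0\}$ and $\{0\}\times Y$; in addition, your explicit use of $(x,0)*(x',0)=(x*x',0)$ to show $X^{n+1}\times\{0\}\le (X\times Y)^{n+1}$ fills in a step the paper compresses into ``the star product is defined componentwise.''
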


\begin{proof}
Since the star product is defined componentwise,
$
(X\times Y)^n=X^n\times Y^n
$
for every $n\geq1$. Passing to the final terms gives the first
equality. Let now $R=\RL(X\times Y)$. The ideal\\
$\RL(X)\times\RL(Y)$ contains
$L_\infty(X)\times L_\infty(Y)$, and so
Theorem~\ref{thm:residual}\textup{(1)} gives
$R\leq\RL(X)\times\RL(Y)$.

Conversely, $R\cap(X\times\{0\})$ is an ideal of $X\times\{0\}$
containing $L_\infty(X)\times\{0\}$. It follows that
$\RL(X)\times\{0\}\leq R$. Similarly,
$\{0\}\times\RL(Y)\leq R$. Since $R$ is an additive subgroup,
$\RL(X)\times\RL(Y)\leq R$, proving the second equality.
\end{proof}
\begin{lemma}\label{lem:subskew-residual}
Let $Y\leq X$ be a subskew brace. Then $Y^n\leq X^n$ for every
$n\geq1$. In particular, every subskew brace of a left-nilpotent skew
brace is left nilpotent, and
$
\RL(Y)\leq Y\cap\RL(X).
$
\end{lemma}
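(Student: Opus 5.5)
The plan is to prove the three assertions in order, each feeding the next.

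\emph{Step 1: the inclusion $Y^n\leq X^n$.} I argue by induction on $n$. The case $n=1$ is $Y\leq X$. Suppose $Y^n\leq X^n$. The group $Y^{n+1}=Y*Y^n$ is generated by the elements $y*z$ with $y\in Y$ and $z\in Y^n$. Since $Y$ is a subskew brace, its star product is the restriction of the star product of $X$. Each such generator therefore lies in $X*X^n=X^{n+1}$, and so $Y^{n+1}\leq X^{n+1}$.

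\emph{Step 2: subskew braces of left-nilpotent skew braces.} If $X^n=0$, then $Y^n\leq X^n=0$, so $Y$ is left nilpotent. Passing to final terms in Step 1 also gives $L_\infty(Y)\leq L_\infty(X)$.

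\emph{Step 3: the inclusion $\RL(Y)\leq Y\cap\RL(X)$.} Since $\RL(Y)\leq Y$ trivially, it suffices to show $\RL(Y)\leq\RL(X)$. Put $I=\RL(X)$; it is an ideal of $X$, so it is an additive normal subgroup and a multiplicative normal subgroup. It follows that $J=Y\cap I$ is normal in both $(Y,+)$ and $(Y,\cdot)$. The main point to check is that $J$ is $\lambda_y$-invariant for $y\in Y$. This holds because $\lambda_y$ preserves both $Y$ (as $Y$ is a subskew brace) and $I$ (as $I$ is a left ideal). Hence $J$ is an ideal of $Y$.

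Next, $Y/J$ embeds into $X/I$ via $y+J\mapsto y+I$. This map is an injective skew brace homomorphism, so $Y/J$ is isomorphic to a subskew brace of $X/I$. By Theorem~\ref{thm:residual}(2), $X/I$ is left nilpotent. Step 2 then shows that $Y/J$ is left nilpotent.

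By the definition of $\RL(Y)$ (equivalently, Lemma~\ref{lem:quot} together with Theorem~\ref{thm:residual}(1)), we conclude $\RL(Y)\leq J\leq\RL(X)$.

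No step is a genuine obstacle. The only place needing care is Step 3: one must confirm that $Y\cap I$ is an ideal of $Y$ and that the second isomorphism map respects both operations. Both facts are standard once $\lambda$-invariance is noted.
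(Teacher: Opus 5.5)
Your proposal is correct and follows essentially the same route as the paper. Both prove $Y^n\leq X^n$ by induction, then identify $Y/(Y\cap\RL(X))$ with the left-nilpotent image of $Y$ in $X/\RL(X)$ and conclude by the definition of the residual. Your explicit checks that $Y\cap\RL(X)$ is an ideal of $Y$ and that $y+J\mapsto y+I$ is an injective skew brace homomorphism are details the paper leaves implicit.
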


\begin{proof}
The first assertion follows by induction, since
$
Y^{n+1}=Y*Y^n\leq X*X^n=X^{n+1}.
$
Hence the image of $Y$ in $X/\RL(X)$ is left nilpotent. Since this
image is isomorphic to $Y/(Y\cap\RL(X))$, the definition of the
residual gives $\RL(Y)\leq Y\cap\RL(X)$.
\end{proof}
\section{The ideality problem for skew braces of nilpotent type}
\label{sec:ideality}

Throughout this section, $X$ denotes a finite skew brace of nilpotent
type. We put
\[
G=(X,+),\qquad A=(X,\cdot),\qquad
\Lambda=\lambda(A),\qquad L=L_\infty(X).
\]
Since $X$ is of nilpotent type, Corollary~\ref{cor:addnormal} shows
that $L$ is a strong left ideal. We investigate whether $L$ must
always be an ideal of $X$.

\begin{conjecture}\label{conj:ideal}
For every finite skew brace $X$ of nilpotent type,
$L_\infty(X)$ is an ideal of $X$. Equivalently,
$
\RL(X)=L_\infty(X).
$
\end{conjecture}

We begin by comparing the left-nilpotent residual with the nilpotent
residual of the multiplicative group.

\begin{theorem}\label{thm:twodesc}
We have
$
\RL(X)=\Id_X(L)=\Id_X\bigl(\gamma_\infty(A)\bigr).
$
Equivalently, $\RL(X)$ is the smallest ideal $I$ of $X$ such that
$A/I$ is nilpotent.
\end{theorem}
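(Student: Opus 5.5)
The first equality $\RL(X)=\Id_X(L)$ is Theorem~\ref{thm:residual}(1), so what has to be shown is $\Id_X(L)=\Id_X(\gamma_\infty(A))$. The plan is to characterize both ideals by the same property of ideals. For any ideal $I$ of $X$, the quotient $X/I$ is again of nilpotent type, since its additive group $G/I$ is a quotient of the nilpotent group $G$. By Theorem~\ref{thm:csv}, $X/I$ is left nilpotent if and only if its multiplicative group $A/I$ is nilpotent. By Lemma~\ref{lem:quot}, the first condition is equivalent to $L\le I$. The second is equivalent to $\gamma_\infty(A)\le I$, because $I$ is normal in $A$ and a quotient of a finite group is nilpotent exactly when it contains the nilpotent residual.

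It follows that, for every ideal $I$ of $X$,
\[
L\le I\iff X/I\text{ is left nilpotent}\iff \gamma_\infty(A)\le I .
\]
So the ideals containing $L$ are exactly the ideals containing $\gamma_\infty(A)$. Taking intersections over this common family gives $\Id_X(L)=\Id_X(\gamma_\infty(A))$. The same family is precisely the set of ideals $I$ with $A/I$ nilpotent, and its intersection $\Id_X(\gamma_\infty(A))$ is itself a member of the family. Hence $\RL(X)$ is the smallest ideal $I$ with $A/I$ nilpotent, which is the stated reformulation.

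No step here is genuinely hard, but two points need to be checked explicitly. First, $\gamma_\infty(A)$ is not a priori an ideal or even a left ideal, so $\Id_X(\gamma_\infty(A))$ must be read as the smallest ideal containing that subset; this is harmless because everything is phrased through containment in ideals. Second, Theorem~\ref{thm:csv} applies to $X/I$ because nilpotency of the additive group passes to quotients. The finiteness assumption is used both in Theorem~\ref{thm:csv} and in the existence of $\gamma_\infty(A)$ as the nilpotent residual.
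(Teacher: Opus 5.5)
Your proposal is correct and follows the paper's proof essentially step for step. Both arguments invoke Theorem~\ref{thm:residual}(1) for the first equality, then apply Theorem~\ref{thm:csv} to the nilpotent-type quotient $X/I$. Combined with Lemma~\ref{lem:quot}, this shows that an ideal contains $L$ exactly when it contains $\gamma_\infty(A)$, and intersecting over these ideals gives the result.
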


\begin{proof}
The equality $\RL(X)=\Id_X(L)$ follows from
Theorem~\ref{thm:residual}. Let $I$ be an ideal of $X$. Since
$(X/I,+)$ is nilpotent, Theorem~\ref{thm:csv} shows that $X/I$ is
left nilpotent if and only if $A/I$ is nilpotent. The latter is
equivalent to $\gamma_\infty(A)\leq I$. Taking the intersection of
all such ideals gives
$\RL(X)=\Id_X(\gamma_\infty(A))$.
\end{proof}

\begin{corollary}\label{cor:nilptype}
The following statements hold.
\begin{enumerate}
\item If $L$ is an ideal of $X$, then $\RL(X)=L$ and
$\gamma_\infty(A)\leq L$.
\item If $\gamma_\infty(A)$ is an ideal of $X$, then
$\RL(X)=\gamma_\infty(A)$ and $L\leq\gamma_\infty(A)$.
\item If both $L$ and $\gamma_\infty(A)$ are ideals of $X$, then
$L=\RL(X)=\gamma_\infty(A)$.
\end{enumerate}
\end{corollary}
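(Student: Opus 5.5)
The corollary follows directly from Theorem~\ref{thm:twodesc}, which gives
\[
\RL(X)=\Id_X(L)=\Id_X\bigl(\gamma_\infty(A)\bigr),
\]
together with the elementary fact that $\Id_X(S)=S$ whenever $S$ is itself an ideal. We prove the three parts in order.

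For~(1), suppose $L$ is an ideal. Then $\Id_X(L)=L$, so $\RL(X)=L$; this is also Theorem~\ref{thm:residual}(4). By the second description, $\RL(X)=\Id_X(\gamma_\infty(A))$ contains $\gamma_\infty(A)$, so $\gamma_\infty(A)\le L$. One can also argue directly: $X/L$ is left nilpotent by Lemma~\ref{lem:quot}, hence $A/L$ is nilpotent by Theorem~\ref{thm:csv}, which forces $\gamma_\infty(A)\le L$. To apply Theorem~\ref{thm:csv} here we need $X/L$ to be of nilpotent type. This holds because $(X/L,+)$ is a quotient of the nilpotent group $G$, and we use that $L$ is normal in both groups to form the quotient.

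For~(2), suppose $\gamma_\infty(A)$ is an ideal. Then $\Id_X(\gamma_\infty(A))=\gamma_\infty(A)$, so Theorem~\ref{thm:twodesc} gives $\RL(X)=\gamma_\infty(A)$. Theorem~\ref{thm:residual}(3) gives $L\le\RL(X)$, hence $L\le\gamma_\infty(A)$.

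For~(3), suppose both $L$ and $\gamma_\infty(A)$ are ideals. Parts~(1) and~(2) give $\RL(X)=L$ and $\RL(X)=\gamma_\infty(A)$, so $L=\RL(X)=\gamma_\infty(A)$.

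None of these steps is a genuine obstacle. The only point that needs care is invoking the two descriptions of $\RL(X)$ from Theorem~\ref{thm:twodesc} under the nilpotent-type hypothesis of this section, which is already in force.
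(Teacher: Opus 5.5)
Your proof is correct and follows the paper's own argument. The paper likewise applies Theorem~\ref{thm:twodesc} together with the minimality of the ideal closure, that is, $\Id_X(S)=S$ whenever $S$ is already an ideal; your extra direct argument for $\gamma_\infty(A)\le L$ in part~(1), via Lemma~\ref{lem:quot} and Theorem~\ref{thm:csv}, is also valid but not needed.
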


\begin{proof}
Apply Theorem~\ref{thm:twodesc} and the minimality of the ideal
generated by a subset.
\end{proof}

Let $Q=G/L$ and let $\Delta\leq\Aut(Q)$ be the group induced by
$\Lambda$ on $Q$. For every prime $p$, let $G_p$ be the Sylow
$p$-subgroup of $G$, and set
$
L_p=L\cap G_p,
\quad
Q_p=G_p/L_p.
$
Since $G$ is nilpotent,
$
G=\prod_pG_p
\quad\text{and}\quad
L=\prod_pL_p.
$
Hence there is a natural isomorphism
\[
Q=G/L\cong\prod_pG_p/L_p=\prod_pQ_p.
\]
From now on we identify $Q$ with $\prod_pQ_p$ through this
isomorphism. Since the Sylow subgroups $G_p$ are characteristic in
$G$, they are $\Lambda$-invariant.

\begin{proposition}\label{prop:nilptype-reduction}
The following conditions are equivalent:
\begin{enumerate}
\item $L$ is an ideal of $X$;
\item $L*X\leq L$;
\item $\lambda_a(x)-x\in L$ for all $a\in L$ and $x\in X$;
\item $\lambda_a$  acts trivially on $G/L$ for all $a\in L$;
\item $[G_p,\lambda_a]\leq L_p$ for every prime $p$ and every
$a\in L$.
\end{enumerate}
\end{proposition}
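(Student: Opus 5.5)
We prove the chain of equivalences $(1)\Leftrightarrow(2)\Leftrightarrow(3)\Leftrightarrow(4)\Leftrightarrow(5)$, using throughout that $L$ is a strong left ideal by Corollary~\ref{cor:addnormal}.

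For $(1)\Leftrightarrow(2)$ we use the criterion of \cite[Proposition~2.2]{DE}, already quoted in the introduction: a strong left ideal $I$ is an ideal if and only if $I*X\subseteq I$. Since $L$ is a strong left ideal, this gives the equivalence at once.

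For $(2)\Leftrightarrow(3)$, note that $a*x=\lambda_a(x)-x$ by the definition of the star product. The subgroup $L*X$ is generated by these elements with $a\in L$ and $x\in X$. Since $L$ is a subgroup of $(X,+)$, the inclusion $L*X\le L$ holds exactly when every generator lies in $L$, which is~(3).

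For $(3)\Leftrightarrow(4)$, because $L\trianglelefteq G$ and $L$ is $\lambda$-invariant, each $\lambda_a$ induces an automorphism of $Q=G/L$. The induced map is trivial precisely when $-x+\lambda_a(x)\in L$ for all $x$. This condition is equivalent to $\lambda_a(x)-x\in L$, since $\lambda_a(x)-x=\lambda_a(x)+\bigl(-x+\lambda_a(x)\bigr)-\lambda_a(x)$ is a conjugate of $-x+\lambda_a(x)$ and $L$ is normal in $G$.

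For $(4)\Leftrightarrow(5)$, we use the identification $Q=\prod_pQ_p$ with $Q_p=G_p/L_p$, where each $G_p$ is $\Lambda$-invariant. An automorphism $\lambda_a$ acts trivially on $Q$ if and only if it acts trivially on each factor $Q_p\cong G_pL/L$. Triviality on $Q_p$ means $[G_p,\lambda_a]\le L$. Since $[G_p,\lambda_a]\le G_p$, this is the same as $[G_p,\lambda_a]\le L\cap G_p=L_p$.

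The only step that is not a pure unwinding of definitions is $(1)\Leftrightarrow(2)$. It depends on quoting the external criterion correctly, namely that a strong left ideal with $I*X\subseteq I$ is normal in $(X,\cdot)$. If we wanted to avoid the citation, we would verify multiplicative normality directly from the identity $x^{-1}\cdot a\cdot x$ expressed through $\lambda$ and $*$.
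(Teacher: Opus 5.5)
Your proof is correct and follows the paper's route. You get (1)$\Leftrightarrow$(2) from \cite[Proposition~2.2]{DE}, using that $L$ is a strong left ideal. Then (2)$\Leftrightarrow$(3)$\Leftrightarrow$(4) come from $a*x=\lambda_a(x)-x$, and (4)$\Leftrightarrow$(5) from the decomposition $Q=\prod_pQ_p$. Beyond the paper, you spell out two steps it leaves implicit: the conjugation identity turning $-x+\lambda_a(x)\in L$ into $\lambda_a(x)-x\in L$, and the reduction $[G_p,\lambda_a]\le L\cap G_p=L_p$.
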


\begin{proof}
Since $L$ is a strong left ideal,
\cite[Proposition~2.2]{DE} gives the equivalence of (1) and
(2). The equivalence of (2), (3) and (4) follows from
$a*x=\lambda_a(x)-x$. Finally, under the identification above, the automorphism of $Q$
induced by $\lambda_a$ is trivial if and only if its induced
automorphism on every $Q_p$ is trivial. For a fixed prime $p$, the
latter condition is equivalent to
$
[G_p,\lambda_a]\leq L_p.
$
This proves the equivalence of \textup{(4)} and \textup{(5)}.
\end{proof}

\begin{lemma}\label{lem:induced-p-action}
For every prime $p$, the group
$\Delta_p=\Delta/C_\Delta(Q_p)$ induced by $\Delta$ on $Q_p$ is a
$p$-group.
\end{lemma}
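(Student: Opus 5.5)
We prove that the group $\Delta_p$ induced on $Q_p$ is a $p$-group by showing that it stabilizes a chain of subgroups of $Q_p$, and then invoking the standard fact that a group of automorphisms stabilizing a normal series of a $p$-group is a $p$-group. The starting point is that $X/L$ is not a skew brace in general, but the left series still gives a filtration of $G$ modulo $L$. By Proposition~\ref{prop:series}, $X^{n+1}=[X^n,\Lambda]$ and $X^m=L$ for some $m$. Put $G_p^{(n)}=X^n\cap G_p$. Each $X^n$ is $\Lambda$-invariant and $G_p$ is characteristic in $G$, so each $G_p^{(n)}$ is $\Lambda$-invariant.

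Since $G$ is nilpotent, every subgroup of $G$ is the product of its Sylow intersections. So $X^n=\prod_q(X^n\cap G_q)$, and $[X^n,\Lambda]=\prod_q[X^n\cap G_q,\Lambda]$, because commutators with $\Lambda$ of elements of the $\Lambda$-invariant subgroup $X^n\cap G_q$ stay in $G_q$. Comparing $p$-parts gives
\[
G_p^{(n+1)}=[G_p^{(n)},\Lambda].
\]
Thus $G_p=G_p^{(1)}\ge G_p^{(2)}\ge\cdots\ge G_p^{(m)}=L_p$, and $\Lambda$ acts trivially on each factor $G_p^{(n)}/G_p^{(n+1)}$. Passing to $Q_p=G_p/L_p$, we get a chain of $\Delta$-invariant subgroups $Q_p=Q^{(1)}\ge\cdots\ge Q^{(m)}=1$. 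Here $Q^{(n+1)}=[Q^{(n)},\Delta]$, so each $Q^{(n+1)}$ is normal in $Q^{(n)}$ (a commutator subgroup $[Y,\Delta]$ is normal in $Y$), and $\Delta$ centralizes every factor $Q^{(n)}/Q^{(n+1)}$.

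Now apply the classical result that the group of automorphisms of a finite group stabilizing a subnormal series is nilpotent and, more precisely, that every element of it of order coprime to $|Q_p|$ is trivial. So $\Delta_p$, which acts faithfully on $Q_p$, has no nontrivial $p'$-elements, and hence is a $p$-group. If a self-contained argument is preferred, take a $p'$-element $\delta\in\Delta_p$ and use coprime action: if $\delta$ centralizes $Q^{(n)}/Q^{(n+1)}$ and $Q^{(n+1)}$, then $\delta$ centralizes $Q^{(n)}$, because $C_{Q^{(n)}/Q^{(n+1)}}(\delta)$ is the image of $C_{Q^{(n)}}(\delta)$. Downward induction on $n$ then shows $\delta$ is trivial on $Q_p$.

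The main obstacle is the compatibility of the left series with the Sylow decomposition, that is, the identity $G_p^{(n+1)}=[G_p^{(n)},\Lambda]$. This is needed so that the chain terminates exactly at $L_p$. The alternative route, intersecting the series with $G_p$ and only obtaining an inclusion, would not give $\Delta$-triviality on the factors. The identity rests on $X^n$ decomposing into its Sylow parts, which holds because $G$ is nilpotent.
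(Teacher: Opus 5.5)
Your proof is correct, but it is organized differently from the paper's.

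The paper builds no filtration from Sylow intersections. It notes that $[Q,{}_m\Delta]=1$ for some $m$, so $[Q_p,{}_m\Delta_p]=1$; this is just monotonicity of repeated commutators, since $Q_p$ is a $\Delta$-invariant subgroup of $Q$. Then, for a $p'$-element $\alpha\in\Delta_p$, it applies the coprime-action identity $[Q_p,\alpha,\alpha]=[Q_p,\alpha]$ (Isaacs, Lemma~4.29). This gives $[Q_p,\alpha]=[Q_p,{}_n\alpha]=1$ for large $n$, and faithfulness gives $\alpha=1$.

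You reach the same conclusion with the other standard coprime-action tool, lifting of fixed points $C_{H/N}(\delta)=C_H(\delta)N/N$, combined with downward induction along a $\Delta$-stable normal series. At that stage the two arguments are equally short.

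The real difference is the step you call the main obstacle. Your identity $X^{n+1}\cap G_p=[X^n\cap G_p,\Lambda]$ is true, and your proof of it is fine, but the lemma does not need it. Your remark that a mere inclusion would not give triviality on the factors is mistaken:
\begin{itemize}
\item the obvious inclusion $[X^n\cap G_p,\Lambda]\le X^{n+1}\cap G_p$ says exactly that $\Lambda$ centralizes $(X^n\cap G_p)/(X^{n+1}\cap G_p)$;
\item normality of $X^{n+1}\cap G_p$ in $X^n\cap G_p$ follows from $X^{n+1}\trianglelefteq(X^n,+)$;
\item the chain ends at $X^m\cap G_p=L_p$ by definition.
\end{itemize}
The chain $[Q_p,{}_n\Delta]$ used by the paper also does the job directly.

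What your identity does buy is extra structure: $X^{n+1}\cap G_p=[G_p,{}_n\Lambda]$, so $L_p$ is the final commutator of $G_p$ under $\Lambda$. For this lemma that information is dispensable.
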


\begin{proof}
Since the left series of $X$ eventually reaches $L$, there exists
$m$ such that $[Q,{}_m\Delta]=1$. Hence
$[Q_p,{}_m\Delta_p]=1$. Let $\alpha\in\Delta_p$ have order coprime to $p$. By
\cite[Lemma~4.29]{Isaac},
$
[Q_p,\alpha,\alpha]=[Q_p,\alpha].
$ It follows that
$[Q_p,{}_n\alpha]=[Q_p,\alpha]$ for every $n\geq1$. Since the
left-hand side is trivial for sufficiently large $n$, we obtain
$[Q_p,\alpha]=1$. As the action of $\Delta_p$ on $Q_p$ is faithful, we have
$\alpha=1$. Thus $\Delta_p$ is a $p$-group.
\end{proof}

\begin{proposition}
\label{prop:delta-product}
With the notation above, we have
$
\Delta\cong\prod_p\Delta_p$ and $Q\rtimes\Delta\cong\prod_p(Q_p\rtimes\Delta_p).
$
Under these identifications, the Sylow $q$-subgroup of $\Delta$
centralizes $Q_p$ whenever $q\neq p$.
\end{proposition}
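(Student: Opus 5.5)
We realise $\Delta$ as a subgroup of $\prod_p\Delta_p$ via the diagonal map
\[
\theta:\Delta\longrightarrow\prod_p\Delta_p,\qquad \delta\mapsto\bigl(\delta C_\Delta(Q_p)\bigr)_p .
\]
Because $Q=\prod_pQ_p$ and $\Delta$ acts faithfully on $Q$, an element of $\Delta$ that is trivial on every $Q_p$ is trivial. Hence $\bigcap_pC_\Delta(Q_p)=1$, and $\theta$ is injective. By Lemma~\ref{lem:induced-p-action}, every $\Delta_p$ is a $p$-group, so $\prod_p\Delta_p$ is nilpotent. It follows that $\Delta$, being isomorphic to a subgroup of it, is nilpotent, and $\Delta=\prod_qS_q$, where $S_q$ is the Sylow $q$-subgroup of $\Delta$.

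Next we show that $S_q$ centralizes $Q_p$ for $q\neq p$. The image of $S_q$ in $\Delta_p$ is a $q$-group inside a $p$-group, hence trivial, so $S_q\le C_\Delta(Q_p)$. This is the last assertion of the proposition.

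Surjectivity of $\theta$ follows from the same fact. The projection $\Delta\to\Delta_p$ is surjective and kills $S_q$ for every $q\neq p$. Hence it restricts to a surjection $S_p\to\Delta_p$. It is injective on $S_p$: an element $s\in S_p$ trivial on $Q_p$ is also trivial on $Q_q$ for $q\neq p$ (by the previous paragraph applied to $S_p$ and $Q_q$). So $s$ acts trivially on $Q$, and $s=1$ by faithfulness. Thus $S_p\cong\Delta_p$ via $\theta$, and therefore $\theta$ maps $\Delta=\prod_pS_p$ isomorphically onto $\prod_p\Delta_p$.

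For the semidirect product, under these identifications $S_p\cong\Delta_p$ acts on $Q_p$ by the induced action and trivially on $Q_q$ for $q\ne p$. Therefore the map
\[
Q\rtimes\Delta=\Bigl(\prod_pQ_p\Bigr)\rtimes\Bigl(\prod_pS_p\Bigr)\longrightarrow\prod_p(Q_p\rtimes\Delta_p),\qquad (x,s)\mapsto\bigl((x_p,s_p)\bigr)_p,
\]
is a homomorphism, which one checks componentwise since the factors for different primes commute. It is a bijection, which completes the proof.

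The only step needing care is the bookkeeping that $S_p$ acts faithfully on $Q_p$. This is where we combine faithfulness of $\Delta$ on $Q$ with the centralizing statement; the other steps are formal.
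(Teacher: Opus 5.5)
Your argument is correct, but it obtains $\Delta\cong\prod_p\Delta_p$ by a different mechanism than the paper.

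The paper counts orders. The diagonal map is injective (by faithfulness) and onto each factor $\Delta_p$, so each $|\Delta_p|$ divides $|\Delta|$. As these orders are pairwise coprime, $\prod_p|\Delta_p|$ divides $|\Delta|$, and injectivity gives the reverse divisibility. Only once the isomorphism is in hand does the paper observe that an element of the factor $\Delta_q$ has trivial component in $\Delta_p$, i.e.\ centralizes $Q_p$. Hence the action on $Q$, and therefore $Q\rtimes\Delta$, splits componentwise.

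You reverse the order. You use injectivity to conclude that $\Delta$ is nilpotent and write $\Delta=\prod_qS_q$. You then prove the centralizing statement first (a $q$-group has trivial image in the $p$-group $\Delta_p$). Finally you get surjectivity by showing that $\theta(S_p)$ is exactly the factor $\Delta_p$.

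The paper's route is shorter and uses only Lagrange's theorem and coprimality. Yours needs the Sylow decomposition of nilpotent groups. In exchange, it identifies the actual Sylow subgroups of $\Delta$ with the $\Delta_p$ and proves the last assertion for them directly, rather than ``under the identification''. This also makes the componentwise check for $Q\rtimes\Delta$ transparent.

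Your separate check that $S_p\to\Delta_p$ is injective is redundant once $\theta$ is known to be injective, but harmless.
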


\begin{proof}
The natural map $\Delta\to\prod_p\Delta_p$ is injective and is
surjective on every factor. Since the orders of the factors are
pairwise coprime, $|\Delta|$ is divisible by $\prod_p|\Delta_p|$.
The reverse divisibility follows from injectivity, and so the map is an
isomorphism. Under the isomorphism $\Delta\cong\prod_p\Delta_p$, an element of the
factor $\Delta_p$ has trivial components in all the other factors.
Therefore $\Delta_p$ acts trivially on $Q_q$ whenever $q\neq p$.
Thus the action of $\Delta$ on $Q=\prod_pQ_p$ is componentwise, and
hence
$
Q\rtimes\Delta\cong\prod_p(Q_p\rtimes\Delta_p).
$
The final claim follows immediately.
\end{proof}

The next result strengthens the inclusion in
Corollary~\ref{cor:nilptype}(1). It does not require $L$ to be an
ideal.

\begin{theorem}\label{thm:core-nilpotent}
Let $C=\operatorname{Core}_A(L)$. Then $A/C$ is nilpotent and
\[
C=\{a\in L:\lambda_a\text{ acts trivially on }G/L\}.
\]
Consequently,
$
\gamma_\infty(A)\leq C\leq L.
$
\end{theorem}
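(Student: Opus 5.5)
The plan is to realize the left multiplication of $A$ on the cosets of $L$ as an affine action on $Q=G/L$. Then $C$ becomes the kernel of a homomorphism from $A$ into the nilpotent group $Q\rtimes\Delta$.

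\emph{Step 1 (additive and multiplicative cosets agree).} For $x\in X$ and $l\in L$ we have $x\cdot l=x+\lambda_x(l)$. Since $L$ is a left ideal, $\lambda_x(L)=L$, and so $x\cdot L=x+L$. Thus the left cosets of $L$ in $A$ are exactly the cosets of $L$ in $G$, that is, the elements of $Q$ (recall that $L\trianglelefteq G$ by Corollary~\ref{cor:addnormal}). For $a\in A$, left multiplication gives
\[
a\cdot(x+L)=a+\lambda_a(x)+L .
\]
Here we use that $a\cdot(x+l)=a\cdot x+\lambda_a(l)$, together with the $\lambda$-invariance and additive normality of $L$. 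Hence $a$ acts on $Q$ by the affine map $q\mapsto \bar a+\bar\lambda_a(q)$, where $\bar a=a+L$ and $\bar\lambda_a\in\Delta$ is the automorphism induced on $Q$. This gives a homomorphism
\[
\varphi\colon A\to Q\rtimes\Delta,\qquad a\mapsto(\bar a,\bar\lambda_a).
\]
It is a homomorphism because it is the permutation representation of $A$ on its left cosets of $L$, and each permutation is written uniquely as a translation composed with an element of $\Delta$.

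\emph{Step 2 (the kernel).} The kernel of the action of a group on the left cosets of a subgroup is the core of that subgroup. Hence $\ker\varphi=\operatorname{Core}_A(L)=C$. On the other hand, $(\bar a,\bar\lambda_a)$ is trivial exactly when $a\in L$ and $\lambda_a$ acts trivially on $G/L$. This proves the displayed description of $C$, and in particular gives $C\le L$.

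\emph{Step 3 (nilpotency).} By Proposition~\ref{prop:delta-product},
\[
Q\rtimes\Delta\cong\prod_p(Q_p\rtimes\Delta_p).
\]
Each $Q_p$ is a $p$-group, and each $\Delta_p$ is a $p$-group by Lemma~\ref{lem:induced-p-action}. So every factor is a $p$-group, and $Q\rtimes\Delta$ is nilpotent. Therefore $A/C\cong\varphi(A)$ is nilpotent, which gives $\gamma_\infty(A)\le C\le L$.

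The only delicate point is Step 1: one must check carefully that the multiplicative coset action really is affine with linear part $\bar\lambda_a$. This uses both that $L$ is $\lambda$-invariant and that $L$ is normal in $(X,+)$, so that $\lambda_a(x+l)+L=\lambda_a(x)+L$. The other steps are direct consequences of earlier results.
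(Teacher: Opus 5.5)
Your proof is correct and follows the paper's route: you use the same map $a\mapsto(a+L,\overline{\lambda_a})$ into $Q\rtimes\Delta$, and nilpotency again comes from Proposition~\ref{prop:delta-product} and Lemma~\ref{lem:induced-p-action}. The only difference is that you recognize this map as the coset action of $A$ on its left cosets of $L$ (via $x\cdot L=x+L$), which gives the homomorphism property and $\ker=\operatorname{Core}_A(L)$ at once; the paper instead checks multiplicativity directly and proves $\operatorname{Core}_A(L)\subseteq\ker\theta$ by a conjugation computation, and adopts the action viewpoint only afterwards, in Corollary~\ref{cor:regularity}.
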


\begin{proof}
By Proposition~\ref{prop:delta-product},
$Q\rtimes\Delta$ is the direct product of the $p$-groups
$Q_p\rtimes\Delta_p$, and hence is nilpotent. Define
\[
\theta\colon A\longrightarrow Q\rtimes\Delta,\qquad
\theta(a)=(a+L,\overline{\lambda_a}),
\] where 
 $\overline{\lambda_a}$ denotes the automorphism induced by
$\lambda_a$ on $Q$. The multiplication in $Q\rtimes\Delta$ is given by
\[
(q,\alpha)(r,\beta)
=
\bigl(q+\alpha(r),\alpha\beta\bigr).
\]
Hence, for $a,b\in A$,
\[
\begin{aligned}
\theta(a)\theta(b)
&=(a+L,\overline{\lambda_a})
  (b+L,\overline{\lambda_b})\\
&=\bigl(a+L+\overline{\lambda_a}(b+L),
        \overline{\lambda_a}\,\overline{\lambda_b}\bigr)\\
&=\bigl(a+\lambda_a(b)+L,
        \overline{\lambda_{a\cdot b}}\bigr)\\
&=\bigl(a\cdot b+L,
        \overline{\lambda_{a\cdot b}}\bigr)\\
&=\theta(a\cdot b).
\end{aligned}
\]
Here we used $a\cdot b=a+\lambda_a(b)$ and the fact that
$\lambda\colon A\to\operatorname{Aut}(G)$ is a group homomorphism.
Therefore $\theta$ is a group homomorphism. Its kernel is
$
K=\{a\in L:\lambda_a\text{ acts trivially on }G/L\}.
$
Since $K\trianglelefteq A$ and $K\leq L$, we have $K\leq C$.

Conversely, let $c\in C$ and $x\in X$. Since $C\trianglelefteq A$,
the element $c'=x^{-1}\cdot c\cdot x$ belongs to $C\leq L$, and
$c\cdot x=x\cdot c'$. Hence
$c+\lambda_c(x)=x+\lambda_x(c')$ which yields that $c*x=\lambda_c(x)-x\in L$ for every $x\in X$. Then
$
\lambda_c(x)+L=x+L
$
for every $x\in X$, and so $\lambda_c$ acts trivially on
$G/L$, that is $c\in K$. It follows that $C=K$.

Finally, $A/C\cong\theta(A)$ is a subgroup of the nilpotent group
$Q\rtimes\Delta$. Therefore $A/C$ is nilpotent and
$\gamma_\infty(A)\leq C\leq L$.
\end{proof}

\begin{corollary}
\label{cor:regularity}
Let $H=\theta(A)$. The semidirect product $Q\rtimes\Delta$ acts on the underlying set of
$Q$ by
$
(q,\delta)\cdot r=q+\delta(r)
\quad(q,r\in Q,\ \delta\in\Delta).
$ Under this action, the group $H$ acts faithfully and
transitively, and
$
\operatorname{Stab}_H(0+L)=\theta(L)\cong L/C.
$
Consequently, $L$ is an ideal of $X$ if and only if $H$ acts
regularly on $Q$.
\end{corollary}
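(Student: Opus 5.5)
The plan is to use the homomorphism $\theta\colon A\to Q\rtimes\Delta$ from the proof of Theorem~\ref{thm:core-nilpotent}, which has kernel $C$, so that $H=\theta(A)\cong A/C$. First I will check that the formula $(q,\delta)\cdot r=q+\delta(r)$ defines an action of $Q\rtimes\Delta$ on $Q$. This is the standard affine action, and the check reduces to
\[
(q,\alpha)\cdot\bigl((r,\beta)\cdot s\bigr)=q+\alpha(r)+\alpha\beta(s),
\]
which is exactly what the product $(q+\alpha(r),\alpha\beta)$ gives. Restricting this action to $H$, we see that $\theta(a)$ sends $x+L$ to $a+\lambda_a(x)+L=a\cdot x+L$. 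So the action of $H$ on $Q$ is the one induced by left multiplication of $A$ on the cosets $x+L$. This is well defined precisely because $\lambda_a(L)=L$, since $L$ is a left ideal.

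Transitivity is immediate: $\theta(a)\cdot(0+L)=a+L$, and $a$ ranges over all of $X$. For faithfulness, suppose $\theta(a)$ fixes every $r\in Q$. Taking $r=0+L$ gives $a+L=0+L$, so $a\in L$. Then $a+\lambda_a(x)+L=x+L$ for every $x$, and since $a\in L$ and $L$ is normal in $G$, this reduces to $\lambda_a(x)+L=x+L$. Hence $\overline{\lambda_a}=1$, and $\theta(a)=(0+L,1)$ is the identity. So the action is faithful. The stabilizer computation follows the same pattern: $\theta(a)$ fixes $0+L$ if and only if $a\in L$. Therefore $\operatorname{Stab}_H(0+L)=\theta(L)$, and $\theta(L)\cong L/(L\cap\ker\theta)=L/C$ because $C\le L$.

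For the final equivalence, recall that a transitive action is regular exactly when the point stabilizers are trivial. By the computation above, this holds if and only if $\theta(L)=1$, that is, $L\le C$, that is, $L=C$. By Theorem~\ref{thm:core-nilpotent}, $L=C$ means every $\lambda_a$ with $a\in L$ acts trivially on $G/L$. By Proposition~\ref{prop:nilptype-reduction}, (4)$\Leftrightarrow$(1), this is equivalent to $L$ being an ideal. Alternatively, $L=\operatorname{Core}_A(L)$ says directly that $L\trianglelefteq A$, and together with the strong left ideal property this makes $L$ an ideal.

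No step is a genuine obstacle. The only point needing care is the faithfulness argument: the additive normality of $L$ (Corollary~\ref{cor:addnormal}) is what lets us cancel $a\in L$ inside the coset $a+\lambda_a(x)+L$.
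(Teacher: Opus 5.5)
Your proposal is correct and follows essentially the same route as the paper: transitivity and the stabilizer come from $\theta(a)\cdot(0+L)=a+L$, the isomorphism $\theta(L)\cong L/C$ comes from $\ker\theta=C\le L$, and regularity is equivalent to $L=C=\operatorname{Core}_A(L)$, i.e.\ $L\trianglelefteq A$, which together with $L$ being a strong left ideal makes $L$ an ideal. The differences are only cosmetic. The paper calls faithfulness clear, since the affine action of $Q\rtimes\Delta$ on $Q$ is already faithful ($(q,\delta)$ fixing every point forces $q=0$ and $\delta=1$), whereas you check it element-wise for $\theta(a)$; you also mention the alternative finish through Proposition~\ref{prop:nilptype-reduction}.
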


\begin{proof}
Clearly this action is faithful. Since
$
\theta(a)=(a+L,\overline{\lambda_a}),
$
we have
\[
\theta(a)\cdot(0+L)
=(a+L)+\overline{\lambda_a}(0+L)
=a+L.
\]
Since $A$ and $G$ have the same underlying set $X$, every element of
$Q=G/L$ has the form $a+L$ for some $a\in A$. As
$
\theta(a)\cdot(0+L)=a+L,
$
the orbit of $0+L$ under $H=\theta(A)$ is all of $Q$. Hence $H$ acts
transitively on $Q$. Moreover,
$\theta(a)$ fixes $0+L$ if and only if $a\in L$. Hence
$\operatorname{Stab}_H(0+L)=\theta(L)$. By
Theorem~\ref{thm:core-nilpotent}, the kernel of
$\theta|_L$ is $C$ and hence
$
\theta(L)\cong L/C.
$
Thus the action of $H$ is regular if and only if $L=C$, equivalently
if and only if $L\trianglelefteq A$. Since $L$ is a strong left ideal,
this is equivalent to $L$ being an ideal of $X$.
\end{proof}

\begin{proposition}\label{prop:local-affine}
Under the identification above, let
$H_p$ be the Sylow $p$-subgroup of $H=\theta(A)$. Then
$
H=\prod_pH_p$, $H_p\leq Q_p\rtimes\Delta_p,
$
and $H_p$ acts transitively on $Q_p$. Consequently, $L$ is an ideal
of $X$ if and only if every $H_p$ acts regularly on $Q_p$.
\end{proposition}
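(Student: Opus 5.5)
We will prove Proposition~\ref{prop:local-affine} by combining the nilpotency of $Q\rtimes\Delta$ from Proposition~\ref{prop:delta-product} with the transitivity and regularity criterion of Corollary~\ref{cor:regularity}.

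\emph{Step 1: decomposition of $H$.} By Proposition~\ref{prop:delta-product}, $Q\rtimes\Delta\cong\prod_p(Q_p\rtimes\Delta_p)$, and each factor $Q_p\rtimes\Delta_p$ is a $p$-group, since $Q_p$ and $\Delta_p$ are $p$-groups by Lemma~\ref{lem:induced-p-action}. Hence the Sylow $p$-subgroup of $Q\rtimes\Delta$ is exactly $Q_p\rtimes\Delta_p$. Now $H=\theta(A)$ is a subgroup of this nilpotent group, so $H$ is nilpotent. Therefore $H=\prod_pH_p$, and each $H_p$ is a $p$-subgroup, so it lies in the unique Sylow $p$-subgroup $Q_p\rtimes\Delta_p$.

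\emph{Step 2: transitivity of $H_p$ on $Q_p$.} Under the identification, the action of $Q\rtimes\Delta$ on $Q=\prod_pQ_p$ is componentwise. The factor $Q_q\rtimes\Delta_q$ acts on $Q_q$ and acts trivially on $Q_p$ for $q\neq p$, by the last assertion of Proposition~\ref{prop:delta-product} together with the fact that translations by $Q_q$ do not move the $Q_p$-coordinate. Consequently, an element $h=\prod_q h_q$ of $H$ acts on the $p$-coordinate only through $h_p$. By Corollary~\ref{cor:regularity}, $H$ is transitive on $Q$. Projecting to the $p$-coordinate, for any $r\in Q_p$ there is some $h\in H$ sending $0$ to an element whose $p$-coordinate is $r$. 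That $p$-coordinate equals $h_p\cdot 0$, so $H_p$ is transitive on $Q_p$.

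\emph{Step 3: the regularity criterion.} Since the action is componentwise, the stabilizer of $0+L=(0)_p$ in $H=\prod_pH_p$ is $\prod_p\operatorname{Stab}_{H_p}(0)$. Thus $H$ acts regularly on $Q$ if and only if every $H_p$ acts regularly on $Q_p$, given that each $H_p$ is already transitive. Corollary~\ref{cor:regularity} then finishes the proof.

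The main obstacle is the bookkeeping in Step~2: one must check carefully that elements of $Q_q\rtimes\Delta_q$ fix the $Q_p$-coordinate pointwise, both the translation part and the automorphism part, so that the projection argument is valid. Everything else is routine Sylow theory for nilpotent groups.
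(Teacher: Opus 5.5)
Your proposal is correct and follows the paper's proof: you embed $H$ in the nilpotent group $\prod_p(Q_p\rtimes\Delta_p)$ to get $H=\prod_pH_p$ with $H_p\leq Q_p\rtimes\Delta_p$, use the fact that only $H_p$ moves the $p$-coordinate to get transitivity, and reduce regularity of $H$ to regularity of each $H_p$ (via the componentwise point stabilizer) before invoking Corollary~\ref{cor:regularity}. Your Steps~2 and~3 spell out in full the componentwise bookkeeping that the paper states in one line each.
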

\begin{proof}
The group $H$ is a subgroup of the finite nilpotent group
$\prod_p(Q_p\rtimes\Delta_p)$, and so it is nilpotent and is the direct
product of its Sylow subgroups. A $p$-subgroup has trivial projection
to every $q$-factor with $q\neq p$, and hence
$H_p\leq Q_p\rtimes\Delta_p$.

Since $H$ is transitive on $Q=\prod_pQ_p$ and only $H_p$ can move
the $p$-component, $H_p$ acts transitively on $Q_p$. Moreover, the
action of $H$ on $Q$ is regular if and only if the action of every
$H_p$ on $Q_p$ is regular. The final assertion now follows from Corollary~6.8.\end{proof}

\begin{corollary}
\label{cor:multres-ideal}
If $\gamma_\infty(A)$ is an ideal of $X$, then
$
L=\gamma_\infty(A)=\RL(X).
$
In particular, $L$ is an ideal of $X$.
\end{corollary}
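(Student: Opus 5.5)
We need to prove: if $\gamma_\infty(A)$ is an ideal, then $L=\gamma_\infty(A)=\RL(X)$.

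The plan is to combine Corollary~\ref{cor:nilptype}(2) with Theorem~\ref{thm:core-nilpotent} and compare the two inclusions. First, because $\gamma_\infty(A)$ is assumed to be an ideal of $X$, Corollary~\ref{cor:nilptype}(2) gives $\RL(X)=\gamma_\infty(A)$ and $L\leq\gamma_\infty(A)$. (This follows from Theorem~\ref{thm:twodesc}: the ideal generated by $\gamma_\infty(A)$ is $\gamma_\infty(A)$ itself, and $L\leq\RL(X)$ by Theorem~\ref{thm:residual}(3).)

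Second, Theorem~\ref{thm:core-nilpotent} holds without any ideality hypothesis on $L$. It gives
\[
\gamma_\infty(A)\leq\operatorname{Core}_A(L)\leq L.
\]
Combining this with the first step yields $L\leq\gamma_\infty(A)\leq L$, so $L=\gamma_\infty(A)=\RL(X)$. Since $\RL(X)$ is an ideal, $L$ is an ideal. Equivalently, one may invoke Theorem~\ref{thm:residual}(4).

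There is no real obstacle here: all the work is in Theorem~\ref{thm:core-nilpotent}, namely the nilpotency of $Q\rtimes\Delta$ and the embedding $\theta$. The only point to check is that the inclusion $\gamma_\infty(A)\leq L$ from that theorem was established for arbitrary $X$ of nilpotent type, and not merely when $L$ is an ideal. The proof of Theorem~\ref{thm:core-nilpotent} indeed makes no such assumption.
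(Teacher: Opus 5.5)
Your proof is correct and is essentially the paper's argument. The paper also gets $\RL(X)=\Id_X(\gamma_\infty(A))=\gamma_\infty(A)$ from Theorem~\ref{thm:twodesc}, then squeezes $\gamma_\infty(A)\leq L\leq\RL(X)$ using Theorem~\ref{thm:core-nilpotent} and Theorem~\ref{thm:residual}(3); routing the first step through Corollary~\ref{cor:nilptype}(2) only repackages the same facts.
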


\begin{proof}
By Theorem~\ref{thm:twodesc},
$
\RL(X)=\Id_X\bigl(\gamma_\infty(A)\bigr)
      =\gamma_\infty(A).
$
On the other hand, Theorem~\ref{thm:core-nilpotent} and
Theorem~\ref{thm:residual}(3) give
$
\gamma_\infty(A)\leq L\leq\RL(X).
$
Hence all three groups are equal.
\end{proof}

\begin{theorem}\label{thm:abelianized-coprime}
If $
\gcd\bigl(|(L,\cdot)_{\mathrm{ab}}|,[X:L]\bigr)=1,
$
then $L$ is an ideal of $X$, and
$
\RL(X)=L.
$
\end{theorem}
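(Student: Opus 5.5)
The plan is to use the affine representation $\theta\colon A\to Q\rtimes\Delta$ from Theorem~\ref{thm:core-nilpotent}. By Corollary~\ref{cor:regularity}, $L$ is an ideal of $X$ exactly when the point stabilizer $\theta(L)\cong L/C$ is trivial, where $C=\operatorname{Core}_A(L)$. So I will show that $\theta(L)=1$. The idea is to find two constraints on the set of primes dividing $|\theta(L)|$ and check that the coprimality hypothesis makes them incompatible.

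\textbf{First constraint: primes of $\theta(L)$ divide $|(L,\cdot)_{\mathrm{ab}}|$.} The group $\theta(L)$ is a subgroup of the nilpotent group $Q\rtimes\Delta$, so it is nilpotent. For a finite nilpotent group $N$, the abelianization $N/N'$ has exactly the same prime divisors as $|N|$. Indeed, $N$ is the direct product of its Sylow subgroups $N_p$, and for $N_p\neq 1$ we have $N_p'<N_p$. Moreover $\theta(L)\cong L/C$ is a quotient of $(L,\cdot)$. Hence $\theta(L)_{\mathrm{ab}}$ is a quotient of $(L,\cdot)_{\mathrm{ab}}$, and so every prime dividing $|\theta(L)|$ divides $|(L,\cdot)_{\mathrm{ab}}|$.

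\textbf{Second constraint: primes of $\theta(L)$ divide $[X:L]$.} Here I use Proposition~\ref{prop:local-affine}. We have $H=\prod_pH_p$ with $H_p\le Q_p\rtimes\Delta_p$, and the action on $Q=\prod_pQ_p$ is componentwise. Therefore the stabilizer of $0+L$ in $H$ is the product over $p$ of the stabilizers in $H_p$ of the zero of $Q_p$. Each of these is a $p$-group.
\begin{itemize}
\item[] If $p\nmid[X:L]=|Q|$, then $Q_p=1$.
\item[] Since $\Delta_p$ acts faithfully on $Q_p$, this forces $\Delta_p=1$.
\item[] Hence $Q_p\rtimes\Delta_p=1$ and $H_p=1$.
\end{itemize}
So the Sylow $p$-subgroup of the stabilizer $\theta(L)$ is trivial whenever $p\nmid[X:L]$. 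Thus every prime dividing $|\theta(L)|$ divides $[X:L]$.

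\textbf{Conclusion.} Combining the two constraints with the hypothesis $\gcd\bigl(|(L,\cdot)_{\mathrm{ab}}|,[X:L]\bigr)=1$, no prime divides $|\theta(L)|$. Hence $\theta(L)=1$, so $L=C$, and $L$ is normal in $A$. Corollary~\ref{cor:regularity} then shows that $H$ acts regularly and that $L$ is an ideal of $X$. Theorem~\ref{thm:residual}(4) gives $\RL(X)=L$.

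The main point needing care is the second constraint: the stabilizer must be described componentwise, and the factor $H_p$ must be shown to vanish when $Q_p$ is trivial. Both follow from the faithfulness of $\Delta_p$ on $Q_p$ and the product decomposition in Proposition~\ref{prop:delta-product}.
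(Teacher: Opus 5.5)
Your proof is correct and is essentially the paper's argument. The paper works prime by prime with the homomorphisms $\varphi_p\colon(L,\cdot)\to\Delta_p$, using that $\Delta_p$ is a $p$-group and that a nontrivial $p$-group has a quotient of order $p$. You package the same facts globally through the nilpotent image $\theta(L)\cong L/C$ (which is just the joint image of the $\varphi_p$) and conclude via Corollary~\ref{cor:regularity} instead of Proposition~\ref{prop:nilptype-reduction}.
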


\begin{proof}
For each prime $p$ and each $a\in L$, the automorphism $\lambda_a$
induces an automorphism of $Q_p=G_p/L_p$ given by
$
g+L_p\longmapsto\lambda_a(g)+L_p.
$
Since $\lambda_{a\cdot b}=\lambda_a\lambda_b$, this defines a group
homomorphism
$
\varphi_p\colon (L,\cdot)\longrightarrow\Delta_p.
$

Suppose that $p\mid[X:L]$. By Lemma~\ref{lem:induced-p-action},
$\Delta_p$ is a $p$-group. If $\varphi_p\ne 1$, its image
would be a nontrivial $p$-group and hence would have a quotient of
order $p$. It would follow that $(L,\cdot)$ has a quotient of order
$p$, and therefore
$
p\mid |(L,\cdot)_{\mathrm{ab}}|,
$ which is a contradiction. Hence $\varphi_p=1$.

If $p\nmid[X:L]$, then $Q_p=1$ and so the action on $Q_p$ is trivial.
Therefore every element of $L$ acts trivially on
$
Q=\prod_p Q_p.
$
Proposition~\ref{prop:nilptype-reduction} now shows that $L$ is an
ideal of $X$. Finally, Theorem~\ref{thm:residual}(4) gives
$\RL(X)=L$.
\end{proof}

\begin{corollary}
\label{cor:squarefree-final-index}
If $[X:L]$ is square-free, then
$
X^2=L=\RL(X).
$
In particular, $L$ is an ideal of $X$.
\end{corollary}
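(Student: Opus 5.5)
The plan is to show first that $L$ is an ideal whenever $[X:L]$ is square-free, and then to prove $X^2=L$ separately; the equality $L=\RL(X)$ will follow from Theorem~\ref{thm:residual}(4).

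\emph{Step 1: $L$ is an ideal.} I will use Proposition~\ref{prop:nilptype-reduction}(5), or equivalently show that each $\Delta_p$ is trivial. Since $[X:L]=|Q|=\prod_p|Q_p|$ is square-free, every $Q_p$ has order $1$ or $p$. By Lemma~\ref{lem:induced-p-action}, $\Delta_p$ is a $p$-group acting faithfully on $Q_p$. If $|Q_p|=p$, then $\Aut(Q_p)\cong C_{p-1}$ has order prime to $p$, so $\Delta_p=1$. If $|Q_p|=1$, then $\Delta_p=1$ trivially. By Proposition~\ref{prop:delta-product}, $\Delta\cong\prod_p\Delta_p=1$. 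Hence every $\lambda_a$ acts trivially on $G/L$, in particular for $a\in L$. Condition (4) of Proposition~\ref{prop:nilptype-reduction} then shows that $L$ is an ideal, and Theorem~\ref{thm:residual}(4) gives $\RL(X)=L$.

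\emph{Step 2: $X^2=L$.} Since $\Delta=1$, every $\lambda_a$ induces the identity on $G/L$. So $a*x=\lambda_a(x)-x\in L$ for all $a,x\in X$, which gives $X^2=X*X\le L$. The reverse inclusion $L\le X^2$ holds because $L$ is the final term of the left series. Hence $X^2=L$.

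The only point needing care is the faithfulness used in Step 1. By definition, $\Delta_p=\Delta/C_\Delta(Q_p)$ embeds in $\Aut(Q_p)$, so that use is fine. I also rely on the fact that the triviality of all $\Delta_p$ forces $\Delta=1$, which is exactly the injectivity of $\Delta\to\prod_p\Delta_p$ from Proposition~\ref{prop:delta-product}. I expect no real obstacle beyond these checks. As a cross-check, Theorem~\ref{thm:abelianized-coprime} is not needed, since the triviality of $\Delta$ gives the conclusion directly.
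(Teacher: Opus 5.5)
Your proof is correct and follows essentially the paper's argument: square-freeness makes each $Q_p$ of order at most $p$, Lemma~\ref{lem:induced-p-action} then forces every $\Delta_p$ to be trivial, so $\Delta=1$ and $X*X\le L\le X^2$. The only difference is cosmetic. You obtain ideality from Proposition~\ref{prop:nilptype-reduction}(4), whereas the paper first derives $L*X\le X*X\le L$ and applies \cite[Proposition~2.2]{DE} directly, which is the same criterion that underlies that proposition.
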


\begin{proof}
Since $|Q|=[X:L]$ is square-free, every nontrivial $Q_p$ has order
$p$. By Lemma~\ref{lem:induced-p-action}, the group induced by
$\Delta$ on $Q_p$ is a $p$-group. Since
$|\operatorname{Aut}(Q_p)|=p-1$, this induced action is trivial.
Hence $\Delta=1$. It means that every $\lambda_a$ acts trivially on $G/L$.
Therefore, for all $a,x\in X$, $
\lambda_a(x)+L=x+L,
$
and hence
$
a*x=\lambda_a(x)-x\in L.
$
Thus we have $X*X\leq L$. Since $L\leq X^2=X*X$, we obtain
$X^2=L$. Hence $L*X\leq L$, and
\cite[Proposition~2.2]{DE} shows that $L$ is an ideal. Finally,
$\RL(X)=L$ by Theorem~\ref{thm:residual}(4).
\end{proof}

\begin{corollary}[Necessary conditions for a counterexample]
\label{cor:local-obstruction}
If $L$ is not an ideal of $X$, then there exists a prime $p$ such that
$
p\mid |(L,\cdot)_{\mathrm{ab}}|$ and $
p^2\mid[X:L].
$
Consequently, $p^3\mid|X|$. In particular, if $|X|$ is cube-free,
then $L$ is an ideal of $X$ and
$
\RL(X)=L.
$
\end{corollary}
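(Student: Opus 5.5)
We argue by contraposition, refining the proof of Theorem~\ref{thm:abelianized-coprime} prime by prime. Suppose $L$ is not an ideal. By Proposition~\ref{prop:nilptype-reduction}, there exist a prime $p$ and an element $a\in L$ such that $\lambda_a$ acts nontrivially on $Q_p=G_p/L_p$. Equivalently, the homomorphism $\varphi_p\colon(L,\cdot)\to\Delta_p$ from the proof of Theorem~\ref{thm:abelianized-coprime} is nontrivial. Since $\Delta_p$ is a $p$-group by Lemma~\ref{lem:induced-p-action}, the image $\varphi_p(L)$ is a nontrivial $p$-group. Such a group has a normal subgroup of index $p$, so $(L,\cdot)$ has a quotient of order $p$. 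Hence $p\mid|(L,\cdot)_{\mathrm{ab}}|$. This gives the first divisibility, for the same prime $p$ at which the action is nontrivial.

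For the second divisibility we show $|Q_p|\ge p^2$. Since $\lambda_a$ acts nontrivially on $Q_p$, we have $Q_p\neq 1$, so $p\mid[X:L]$. If $|Q_p|=p$, then $\operatorname{Aut}(Q_p)$ has order $p-1$, which is coprime to $p$. The $p$-group $\Delta_p\le\operatorname{Aut}(Q_p)$ is then trivial, as in the proof of Corollary~\ref{cor:squarefree-final-index}. This contradicts the nontriviality of $\varphi_p$. Therefore $|Q_p|=p^k$ with $k\ge2$, and $p^2\mid|Q|=[X:L]$, using $Q\cong\prod_q Q_q$.

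For the consequence, note that $p\mid|(L,\cdot)_{\mathrm{ab}}|$ forces $p\mid|L|$. Indeed, $(L,\cdot)_{\mathrm{ab}}$ is a quotient of $(L,\cdot)$, whose order is $|L|$ because $L$ is a subskew brace, so its multiplicative group has the same underlying set. Combining this with $p^2\mid[X:L]$ and $|X|=|L|\,[X:L]$ gives $p^3\mid|X|$. If $|X|$ is cube-free, no such prime exists, so $L$ is an ideal, and Theorem~\ref{thm:residual}(4) yields $\RL(X)=L$.

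The only delicate point is making sure the same prime $p$ works for both divisibilities. This is why the argument starts from a prime where $\varphi_p$ is nontrivial, rather than applying Theorem~\ref{thm:abelianized-coprime} and Corollary~\ref{cor:squarefree-final-index} separately. Everything else is routine bookkeeping.
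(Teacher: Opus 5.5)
Your proposal is correct and follows essentially the same argument as the paper. You use Proposition~\ref{prop:nilptype-reduction} to choose a prime $p$ at which the image of $(L,\cdot)$ in $\Delta_p$ is nontrivial, invoke Lemma~\ref{lem:induced-p-action} to get $p\mid|(L,\cdot)_{\mathrm{ab}}|$, and exclude $|Q_p|=p$ because $|\Aut(Q_p)|=p-1$, exactly as the paper does, while your explicit justification of $p\mid|L|$ spells out a step the paper leaves implicit.
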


\begin{proof}

Since $L$ is not an ideal, Proposition~\ref{prop:nilptype-reduction}
shows that the image of $(L,\cdot)$ in
$\Delta_p$ is nontrivial for some prime $p$. By Lemma~\ref{lem:induced-p-action}, this
image is a nontrivial $p$-group. Hence
$
p\mid |(L,\cdot)_{\mathrm{ab}}|.
$

The same image acts nontrivially on $Q_p$. If $|Q_p|=p$, then $|\Aut(Q_p)|=p-1$ which is impossible. Therefore
$
p^2\mid |Q_p|\mid [X:L].
$
Since $p\mid |L|$, it follows that $p^3\mid |X|$. The final claim
now follows from Theorem~\ref{thm:residual}\textup{(4)}.
\end{proof}
The preceding results give criteria in terms of the action of
$(L,\cdot)$ on $G/L$. We now give an independent criterion involving
the index of $L$ in its ideal closure. It is convenient to prove it
for an arbitrary strong left ideal containing $\gamma_\infty(A)$.

\begin{theorem}\label{thm:coprime-gap}
Let $X$ be a skew brace of nilpotent type and let $K$ be a
strong left ideal of $X$ containing $\gamma_\infty(A)$.  If
$
\gcd\bigl(|K|,\,[\,\Id_X(K):K\,]\bigr)=1,
$
then $K$ is an ideal of $X$; in particular $\Id_X(K)=K$.
\end{theorem}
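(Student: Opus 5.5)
The plan is to show that $K$ is normal in $A=(X,\cdot)$. Since $K$ is already a strong left ideal, it is then an ideal, because an ideal is by definition a left ideal that is normal in both underlying groups. The argument is purely group-theoretic in $A$. It combines three facts: $K$ is subnormal in $A$, $K$ is a Hall subgroup of the multiplicative group of its ideal closure, and subnormal Hall subgroups are normal.

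\emph{Step 1: subnormality.} A left ideal is a subskew brace, so $(K,\cdot)$ is a subgroup of $A$. By hypothesis it contains $\gamma_\infty(A)$, which is normal in $A$. The quotient $A/\gamma_\infty(A)$ is nilpotent, and every subgroup of a finite nilpotent group is subnormal. Hence $K/\gamma_\infty(A)$ is subnormal in $A/\gamma_\infty(A)$, and so $K$ is subnormal in $A$. Put $I=\Id_X(K)$. Then $(I,\cdot)$ is a normal subgroup of $A$ containing $K$, and $K$ is subnormal in $(I,\cdot)$ as well, since subnormality passes to intermediate subgroups.

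\emph{Step 2: Hall property.} Let $\pi$ be the set of primes dividing $|K|$. The hypothesis $\gcd(|K|,[I:K])=1$ says exactly that $(K,\cdot)$ is a Hall $\pi$-subgroup of $(I,\cdot)$. This is the one place where the coprimality is used. The index $[I:K]$ is the same whether it is computed additively or multiplicatively, since both equal $|I|/|K|$.

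\emph{Step 3: normality.} By Wielandt's theorem, a subnormal $\pi$-subgroup of a finite group is contained in its $\pi$-core. Hence $K\le O_\pi(I,\cdot)$. Since $K$ is a Hall $\pi$-subgroup and $O_\pi(I,\cdot)$ is a $\pi$-group containing it, comparing orders gives $K=O_\pi(I,\cdot)$. If one prefers to avoid quoting Wielandt, this also follows by induction along a subnormal chain $K=K_0\trianglelefteq K_1\trianglelefteq\cdots\trianglelefteq K_r=I$. In such a chain, $K$ is a Hall subgroup of each $K_i$, and a normal Hall subgroup is characteristic. So $K$ is characteristic in $K_1$, hence normal in $K_2$, and so on up the chain.

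It follows that $K$ is characteristic in $(I,\cdot)$. Since $(I,\cdot)\trianglelefteq A$, we obtain $K\trianglelefteq A$. Together with the assumption that $K$ is a strong left ideal, this shows that $K$ is an ideal of $X$. Minimality of the ideal closure then gives $\Id_X(K)=K$.

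The only step that requires care is Step 3, the passage from subnormal to normal. It is standard finite group theory, and the rest of the argument is bookkeeping.
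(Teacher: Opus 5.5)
Your proof is correct, but it takes a different route from the paper.

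\textbf{The paper's route.} The paper works in the affine group. It uses the homomorphism $\theta_K\colon A\to(G/K)\rtimes\Delta_K$, $a\mapsto(a+K,\overline{\lambda_a})$, which requires $K\trianglelefteq(X,+)$, and notes that $\gamma_\infty(A)\le\ker\theta_K$, so $\theta_K(A)$ is nilpotent. Put $R=\Id_X(K)$ and $H_0=\theta_K(R)$. The paper identifies $P=\theta_K(K)$ as the stabilizer of $0+K$ in $H_0$, with $[H_0:P]=[R:K]$. Hence $P$ is the Hall $\pi$-subgroup of the nilpotent group $H_0$ and contains all its $\pi$-elements. Each conjugate $a\cdot k\cdot a^{-1}$ with $k\in K$ is a $\pi$-element of $R$. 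Its image therefore lies in $P$ and fixes $0+K$, so the conjugate lies in $K$.

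\textbf{Your route.} You stay inside $A$ throughout. The inclusion $\gamma_\infty(A)\le K$ makes $K$ subnormal in $A$, and hence in $(\Id_X(K),\cdot)$. The gcd condition makes $K$ a Hall $\pi$-subgroup there. A subnormal Hall $\pi$-subgroup equals $O_\pi$, so $K$ is characteristic in a normal subgroup of $A$. Your elementary chain argument, using that a normal Hall subgroup is characteristic, is also sound and avoids quoting Wielandt.

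\textbf{What each approach buys.} Your version is shorter and needs no affine representation. It shows that additive normality of $K$ plays no role in proving $K\trianglelefteq A$; it is used only at the end to upgrade multiplicative normality to being an ideal. Like the paper's proof, it never uses that $X$ is of nilpotent type. In fact it proves a purely group-theoretic statement: any subgroup of $A$ that contains $\gamma_\infty(A)$ and is a Hall subgroup of some normal subgroup of $A$ is normal in $A$. The paper's version is tied to the affine-action viewpoint developed earlier in that section. There, the stabilizer of the base point under $\theta$ measures exactly how far the final term is from being an ideal.
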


\begin{proof}
Set 
$R=\Id_X(K)$
and $N=\gamma_\infty(A).$ Since $K$ is a strong left ideal, the construction used in the proof
of Theorem~\ref{thm:core-nilpotent} gives a homomorphism
\[
\theta_K\colon A\longrightarrow (G/K)\rtimes\Delta_K,
\qquad
\theta_K(a)=(a+K,\overline{\lambda_a}),
\]
where $\Delta_K$ is the group induced by $\Lambda$ on $G/K$.
The same calculation, using $N\trianglelefteq A$ and $N\leq K$,
shows that $N\leq\ker\theta_K$. Hence
$
H=\theta_K(A)
$
is a homomorphic image of the nilpotent group $A/N$ and is therefore
nilpotent.

Set
$
W=R/K$, $ H_0=\theta_K(R)$, and $P=\theta_K(K)$. For every $r\in R$,
$
\theta_K(r)\cdot(0+K)=r+K.
$
As $r$ runs through $R$, the cosets $r+K$ run through all the
elements of $W=R/K$. Hence the orbit of $0+K$ under
$H_0=\theta_K(R)$ is $W$. Moreover, $\theta_K(r)$ fixes $0+K$ if and
only if
$
r+K=0+K,
$
that is $r\in K$. Therefore
$
\operatorname{Stab}_{H_0}(0+K)=\theta_K(K)=P.
$ Consequently,
$
[H_0:P]=|W|=[R:K].
$

Let $\pi$ be the set of prime divisors of $|K|$. Since $P$ is a
homomorphic image of $(K,\cdot)$, it is a $\pi$-group. The hypothesis
implies that $[H_0:P]$ is a $\pi'$-number. Thus $P$ is a Hall
$\pi$-subgroup of the nilpotent group $H_0$ and hence contains every
$\pi$-element of $H_0$.

Let $a\in X$ and $k\in K$, and put
$
r=a\cdot k\cdot a^{-1}.
$
Since $(R,\cdot)\trianglelefteq A$, we have $r\in R$. Moreover,\\ $r$
has the same multiplicative order as $k$, and so $\theta_K(r)$ is a
$\pi$-element of $H_0$. Therefore $\theta_K(r)\in P$, and hence
$
r+K=\theta_K(r)\cdot(0+K)=0+K.
$
Thus $r\in K$, proving that $(K,\cdot)\trianglelefteq A$. Since $K$
is a strong left ideal, it is an ideal of $X$. Therefore we have
$\Id_X(K)=K$.
\end{proof}

\begin{corollary}\label{cor:gap-L}
If
$
\gcd\bigl(|L|,\,[\RL(X):L]\bigr)=1,
$
then $L$ is an ideal of $X$ and
$
\RL(X)=L.
$
\end{corollary}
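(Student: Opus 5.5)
We will derive Corollary~\ref{cor:gap-L} from Theorem~\ref{thm:coprime-gap} with the choice $K=L$. This requires checking the two hypotheses of that theorem for $L$, together with identifying $\Id_X(L)$ with $\RL(X)$.

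First, since $X$ is of nilpotent type, Corollary~\ref{cor:addnormal} shows that $L=L_\infty(X)$ is a strong left ideal of $X$. Second, Theorem~\ref{thm:core-nilpotent} gives
\[
\gamma_\infty(A)\leq\operatorname{Core}_A(L)\leq L,
\]
so $L$ contains $\gamma_\infty(A)$. Third, Theorem~\ref{thm:residual}(1) gives $\Id_X(L)=\RL(X)$, so the hypothesis $\gcd\bigl(|L|,[\RL(X):L]\bigr)=1$ is exactly the coprimality condition $\gcd\bigl(|K|,[\Id_X(K):K]\bigr)=1$ for $K=L$.

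Theorem~\ref{thm:coprime-gap} then yields that $L$ is an ideal of $X$. Theorem~\ref{thm:residual}(4) (or directly $\Id_X(L)=L$) gives $\RL(X)=L$.

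There is no real obstacle; the only points needing care are invoking the nilpotent-type hypothesis of this section for Corollary~\ref{cor:addnormal}, and quoting the inclusion $\gamma_\infty(A)\leq L$ from Theorem~\ref{thm:core-nilpotent}, which holds without assuming $L$ is an ideal.
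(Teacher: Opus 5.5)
Your proposal is correct and is essentially the paper's proof. You check that $L$ is a strong left ideal (Corollary~\ref{cor:addnormal}), that $\gamma_\infty(A)\leq L$ (Theorem~\ref{thm:core-nilpotent}) and that $\Id_X(L)=\RL(X)$ (Theorem~\ref{thm:residual}(1)), and then you apply Theorem~\ref{thm:coprime-gap} with $K=L$.
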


\begin{proof}
By Corollary~\ref{cor:addnormal}, $L$ is a strong left ideal, and
$\gamma_\infty(A)\leq L$ by Theorem~\ref{thm:core-nilpotent}.  Since
$\RL(X)=\Id_X(L)$ by Theorem~\ref{thm:residual}(1), the claim
follows from Theorem~\ref{thm:coprime-gap} applied to $K=L$.
\end{proof}

\section{Extensions and iteration of the residual}\label{sec:ext}

\begin{definition}\label{def:relative}
Let $I$ be an ideal of $X$.  Define
$
I_0^X=I$ and $I_{n+1}^X=X*I_n^X\quad\text {for all}\quad n\ge0.
$
We call $(I_n^X)$ the \emph{left series of $I$ relative to $X$}. Note that each $I_n^X$ is a left ideal of $X$ and
$I_{n+1}^X\le I_n^X$.
\end{definition} 

\begin{theorem}\label{thm:relative}
Let $I$ be an ideal of $X$ and assume that $X/I$ is left nilpotent.  Then
the left series of $I$ relative to $X$ eventually reaches
$L_\infty(X)$.
Consequently, $X$ is left nilpotent if and only if $X/I$ is left
nilpotent and $I_n^X=0$ for some $n$.
\end{theorem}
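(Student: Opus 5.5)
We sandwich the relative series between two left series. Since $X/I$ is left nilpotent, Lemma~\ref{lem:quot} gives $L_\infty(X)\le I$. Moreover $X^m=L_\infty(X)$ for some $m$, because $X$ is finite. The plan is to show by induction on $n$ that
\[
L_\infty(X)\le I_n^X\le X^{n+1}\qquad(n\ge0).
\]

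For the upper bound, the case $n=0$ is $I\le X=X^1$. If $I_n^X\le X^{n+1}$, then $I_{n+1}^X=X*I_n^X\le X*X^{n+1}=X^{n+2}$. This uses only that $S*T$ is monotone in each argument, which is clear because it is the subgroup generated by the elements $s*t$.

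For the lower bound, the case $n=0$ is $L_\infty(X)\le I$, noted above. If $L_\infty(X)\le I_n^X$, then Proposition~\ref{prop:idempotent} and monotonicity give
\[
L_\infty(X)=X*L_\infty(X)\le X*I_n^X=I_{n+1}^X .
\]

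Taking $n\ge m-1$, we get $L_\infty(X)\le I_n^X\le X^{n+1}=L_\infty(X)$. So the relative series is eventually constant, equal to $L_\infty(X)$.

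For the consequence, suppose first that $X$ is left nilpotent. Then every quotient $X/I$ is left nilpotent; this follows from Lemma~\ref{lem:quot}, or from Theorem~\ref{thm:residual}(2) applied to the surjection $X\to X/I$ after taking quotients. Also $I_n^X=L_\infty(X)=0$ for large $n$ by the first part. Conversely, suppose $X/I$ is left nilpotent and $I_n^X=0$ for some $n$. Then $I_k^X=0$ for every $k\ge n$, since $X*0=0$. By the first part the stable value of the series is $L_\infty(X)$, so $L_\infty(X)=0$ and $X$ is left nilpotent.

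I see no real obstacle. The only point needing care is the lower bound $L_\infty(X)\le I$, which is exactly where the hypothesis that $X/I$ is left nilpotent is used, via Lemma~\ref{lem:quot}. Without it the relative series would only stabilize at some subgroup of $L_\infty(X)$.
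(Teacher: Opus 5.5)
Your proof is correct and is the paper's argument. The lower bound $L_\infty(X)\le I_n^X$ comes from Lemma~\ref{lem:quot} together with $X*L_\infty(X)=L_\infty(X)$, the upper bound $I_n^X\le X^{n+1}$ is proved by induction, and stabilization of the left series finishes the first part; your handling of the ``consequently'' clause fills in what the paper leaves implicit. One small slip: Theorem~\ref{thm:residual}(2) does not show that quotients of left-nilpotent skew braces are left nilpotent, since it only bounds $\RL(X)$; the right references are Lemma~\ref{lem:quot} or Proposition~\ref{prop:epi}, and your primary citation of Lemma~\ref{lem:quot} already suffices.
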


\begin{proof}
By Lemma~\ref{lem:quot}, $L_\infty(X)\le I=I_0^X$.  If
$L_\infty(X)\le I_n^X$, then
\[
L_\infty(X)=X*L_\infty(X)\le X*I_n^X=I_{n+1}^X.
\]
Thus $L_\infty(X)\le I_n^X$ for every $n$.  On the other hand,
induction gives $I_n^X\le X^{n+1}$.  For all sufficiently large $n$,
$X^{n+1}=L_\infty(X)$, and hence $I_n^X=L_\infty(X)$.
\end{proof}

\begin{remark}\label{rem:notextension}
Finite left-nilpotent skew braces are not closed under extensions.
Let $X=\SB{18}{20}$ and put $I=X^2$. Now $X/I$ is the trivial skew
brace of order $2$, and hence is left nilpotent. Since $|I|=9$, both
underlying groups of $I$ are $3$-groups, so $I$ is left nilpotent by
Theorem~\ref{thm:csv}. But $X$ is not left nilpotent because
$L_\infty(X)\ne0$.
\end{remark}

\begin{definition}\label{def:iterated}
Set $R_0(X)=X$ and $R_{n+1}(X)=\RL(R_n(X))$ for all $n\geq0$.
Then $R_{n+1}(X)$ is an ideal of $R_n(X)$ and hence $(R_n(X))$ is a
descending chain of subskew braces such that every factor
$
R_n(X)/R_{n+1}(X)
$
is left nilpotent.
\end{definition}

\begin{proposition}\label{prop:perfectcore}
For every finite skew brace $X$, the chain $(R_n(X))$ eventually
reaches a subskew brace $P_L(X)$ satisfying
$
P_L(X)*P_L(X)=P_L(X).
$
Moreover $P_L(X)$ is the largest subskew brace $C\le X$ satisfying
$C*C=C$.
\end{proposition}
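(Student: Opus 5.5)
Since $X$ is finite and each $R_{n+1}(X)$ is a subskew brace of $R_n(X)$, the chain $(R_n(X))$ is a descending chain of subsets of a finite set. It therefore stabilizes: there is $m$ with $R_m(X)=R_{m+1}(X)$. Put $P=P_L(X)=R_m(X)$. The plan is to show first that $P$ is perfect, and then that every perfect subskew brace lies in every $R_n(X)$.

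For perfectness, the equality $\RL(P)=P$ combined with Theorem~\ref{thm:residual}(3), applied to the skew brace $P$, gives
\[
P=\RL(P)\le P^2=P*P\le P.
\]
Hence $P*P=P$.

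For maximality, let $C\le X$ be a subskew brace with $C*C=C$. I will show by induction on $n$ that $C\le R_n(X)$. The case $n=0$ is trivial. Suppose $C\le R_n(X)=:Y$, so that $C$ is a subskew brace of $Y$. By Lemma~\ref{lem:subskew-residual}, $C^k\le Y^k$ for every $k$. Since $C*C=C$, every term of the left series of $C$ equals $C$; indeed $C^{k+1}=C*C^k=C*C=C$ inductively. Hence
\[
C\le \bigcap_k Y^k=L_\infty(Y)\le\RL(Y)=R_{n+1}(X).
\]
One could also invoke Proposition~\ref{prop:idempotent} applied inside $Y$ directly. Taking $n=m$ gives $C\le P$. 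Since $P$ itself is perfect, it is the largest such subskew brace, and it is independent of the choice of $m$.

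No step is a serious obstacle. The only point needing care is that Proposition~\ref{prop:idempotent} and Lemma~\ref{lem:subskew-residual} are applied to the subskew brace $Y=R_n(X)$ rather than to $X$. This is legitimate because those results hold for an arbitrary finite skew brace, and $C$ is a subskew brace of $Y$ by the induction hypothesis.
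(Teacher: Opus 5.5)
Your proposal is correct and follows essentially the same route as the paper: the chain stabilizes by finiteness, perfectness comes from $P=\RL(P)\le P*P\le P$ via Theorem~\ref{thm:residual}(3), and maximality follows by induction from $C\le L_\infty(R_n(X))\le\RL(R_n(X))=R_{n+1}(X)$. For that last step the paper applies Proposition~\ref{prop:idempotent} inside each $R_n(X)$, and your variant through Lemma~\ref{lem:subskew-residual} is an equally valid way to get it.
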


\begin{proof}
For every finite skew brace $Y$,
$
\RL(Y)\le Y^2=Y*Y
$
by Theorem~\ref{thm:residual}.  Hence the finite chain eventually
becomes constant.  If $\RL(P)=P$, then
$P\le P*P\le P$ whence $P*P=P$.

Conversely, let $C\le X$ satisfy $C*C=C$.  Now Proposition~\ref{prop:idempotent}
gives
\[
C\le L_\infty(X)\le\RL(X)=R_1(X).
\]
Applying the same argument inside $R_1(X)$, by induction we get
$C\le R_n(X)$ for every $n$.  Hence $C\le P_L(X)$.
\end{proof}

\begin{theorem}\label{thm:res-solv}
For each $n\ge0$,
$
R_n(X)\le D_n(X),
$
and the final terms of $(R_n(X))$ and $(D_n(X))$ coincide with
$P_L(X)$.  In particular, the following are equivalent.
\begin{enumerate}
\item $X$ is KSV-solvable;
\item $R_n(X)=0$ for some $n$;
\item $P_L(X)=0$.
\end{enumerate}

\end{theorem}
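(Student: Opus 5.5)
We prove $R_n(X)\le D_n(X)$ by induction on $n$. The case $n=0$ is trivial. Assume $R_n(X)\le D_n(X)$. Put $Y=R_n(X)$. By Theorem~\ref{thm:residual}(3) applied to $Y$,
\[
R_{n+1}(X)=\RL(Y)\le Y^2=Y*Y .
\]
Since $Y\le D_n(X)$ and the star product of subsets is monotone in both arguments, we get
\[
Y*Y\le D_n(X)*D_n(X)=D_{n+1}(X).
\]
This completes the induction.

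Next we identify the final terms. By Proposition~\ref{prop:perfectcore}, the chain $(R_n(X))$ stabilizes at $P_L(X)$, which is the largest subskew brace $C$ with $C*C=C$. Since $X$ is finite, the descending chain $(D_n(X))$ also stabilizes at some $D=D_m(X)=D_{m+1}(X)$. By \eqref{eq:X2ideal} each $D_n(X)$ is a subskew brace, so $D$ is a subskew brace with $D*D=D$. Maximality of $P_L(X)$ then gives $D\le P_L(X)$. Conversely, $P_L(X)*P_L(X)=P_L(X)$, and induction gives $P_L(X)\le D_n(X)$ for all $n$: if $P_L(X)\le D_n(X)$, then
\[
P_L(X)=P_L(X)*P_L(X)\le D_n(X)*D_n(X)=D_{n+1}(X).
\]
Hence $P_L(X)\le D$. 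Alternatively, $P_L(X)\le R_n(X)\le D_n(X)$ by the first part. So the two final terms coincide and equal $P_L(X)$.

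The equivalences now follow. For (1)$\Rightarrow$(2): if $D_n(X)=0$, then $R_n(X)\le D_n(X)=0$. For (2)$\Rightarrow$(3): $P_L(X)\le R_n(X)$ for all $n$. For (3)$\Rightarrow$(1): the final term of $(D_n(X))$ is $P_L(X)=0$, so $D_n(X)=0$ for large $n$.

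The only point needing care is the monotonicity of $S*T$ in both arguments. It is immediate, because $S*T$ is the subgroup generated by the elements $s*t$, and shrinking $S$ or $T$ shrinks this generating set. We also need that $R_n(X)$ is a skew brace in its own right, so that Theorem~\ref{thm:residual} applies to it; this is recorded in Definition~\ref{def:iterated}. Neither point is a real obstacle.
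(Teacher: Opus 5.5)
Your proof is correct and follows essentially the same route as the paper: the induction $R_{n+1}(X)=\RL(R_n(X))\le R_n(X)*R_n(X)\le D_n(X)*D_n(X)=D_{n+1}(X)$, and then identifying the stable term of the star-derived series with the largest perfect subskew brace $P_L(X)$ from Proposition~\ref{prop:perfectcore}. You also spell out the monotonicity of $S*T$ and the three implications, which the paper leaves implicit.
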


\begin{proof}
The inclusion $R_n(X)\le D_n(X)$ follows by induction.  If it holds
for $n$, then
\[
R_{n+1}(X)=\RL(R_n(X))
\le R_n(X)*R_n(X)
\le D_n(X)*D_n(X)=D_{n+1}(X).
\]
Since $X$ is finite, the star-derived series eventually reaches a
subskew brace $D$ with $D*D=D$.  Every subskew brace $C\le X$ with
$C*C=C$ is contained in every $D_n(X)$.  Thus the final term of the
star-derived series is the largest perfect subskew brace, namely
$P_L(X)$.  The
remaining assertions follow.
\end{proof}

\begin{definition}\label{def:reslength}
If $X$ is KSV-solvable, define the \emph{left-nilpotent residual
length}
\[
\ell_L(X)=\min\{n:R_n(X)=0\}.
\]
\end{definition}

\begin{corollary}\label{cor:length}
If $d_*(X)$ is the least $n$ such that $D_n(X)=0$, then
$
\ell_L(X)\le d_*(X).
$
\end{corollary}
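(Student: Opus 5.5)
The inequality $\ell_L(X)\le d_*(X)$ should follow directly from the inclusion $R_n(X)\le D_n(X)$ for every $n\ge0$, proved in Theorem~\ref{thm:res-solv}. First we check that both quantities are defined. The definition of $d_*(X)$ presupposes that $D_n(X)=0$ for some $n$, so $X$ is KSV-solvable. Then Theorem~\ref{thm:res-solv} gives some $n$ with $R_n(X)=0$, and hence $\ell_L(X)$ is defined as in Definition~\ref{def:reslength}.

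Next we set $d=d_*(X)$, so that $D_d(X)=0$. Applying the inclusion of Theorem~\ref{thm:res-solv} with $n=d$ gives
\[
R_d(X)\le D_d(X)=0,
\]
and hence $R_d(X)=0$. Because $\ell_L(X)$ is the least $n$ with $R_n(X)=0$, it follows that $\ell_L(X)\le d=d_*(X)$.

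No step here is a real obstacle, since the work was done in the inductive inclusion of Theorem~\ref{thm:res-solv}. That induction rests on $\RL(Y)\le Y*Y$, which is Theorem~\ref{thm:residual}(3) applied to the skew brace $Y=R_n(X)$, together with monotonicity of the star product. The only point to watch is that the residual is computed inside $R_n(X)$ and not inside $X$. This is harmless, because $R_n(X)*R_n(X)\le D_n(X)*D_n(X)$ holds for arbitrary subsets, as the generators of the left side are among those of the right side.
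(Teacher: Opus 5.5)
Your proposal is correct and matches the paper's argument: the corollary is stated without a separate proof there, as an immediate consequence of the inclusion $R_n(X)\le D_n(X)$ from Theorem~\ref{thm:res-solv}, evaluated at $n=d_*(X)$ exactly as you do.
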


\begin{remark}\label{rem:iterated-invariant}
Every $R_n(X)$ is invariant under every skew brace automorphism of
$X$. Indeed, $R_1(X)=\RL(X)$ is characteristic. If $R_n(X)$ is
invariant under $\alpha\in\Aut_{\mathrm{br}}(X)$, then
$\alpha|_{R_n(X)}$ is a skew brace automorphism of $R_n(X)$, and the
residual of $R_n(X)$ is characteristic in $R_n(X)$. Hence
$
\alpha(R_{n+1}(X))=R_{n+1}(X).
$
This does not by itself show that $R_n(X)$ is an ideal of $X$. We do not know whether $R_n(X)$ is always an ideal of $X$.

\end{remark}

\section{Supplements and complements of the residual}
\label{sec:supp}

Set $R=\RL(X)$ and $Q=X/R$. Then $Q$ is left nilpotent. A subskew
brace $C\leq X$ is a \emph{supplement} to $R$ if the natural map
$C\to X/R$ is surjective, equivalently if
$
X=R+C.
$
Since $R$ is an ideal, the natural quotient map
$
\rho\colon X\longrightarrow X/R
$
is a homomorphism for both group structures, and
\[
\rho^{-1}(\rho(C))=R+C=R\cdot C.
\]
Here $R+C$ and $R\cdot C$ are set products in the additive and
multiplicative groups, respectively. Hence the supplement condition
is also equivalent to $X=R\cdot C$. If, in addition, $R\cap C=0$,
then $C$ is a \emph{complement} to $R$.

\subsection{A supplement from the lambda action}

\begin{definition}\label{def:C-supplement}
Let $N$ be a strong left ideal of a skew brace $X$. Put $G=(X,+)$ and
$\Lambda=\lambda(A)$.
Let $\rho\colon G\to G/N$ be the natural map, and let $\bar\Lambda$
be the group induced by $\Lambda$ on $G/N$. Set $C
 =\rho^{-1}\bigl(C_{G/N}(\bar\Lambda)).$ Notice that
\[
C=\{c\in X:\lambda_a(c)+N=c+N
        \text{ for all }a\in X\}=\{c\in X:X*c\subseteq N\}.
\]
Then $N\leq C$ and
$
C/N=C_{G/N}(\bar\Lambda).
$
\end{definition}

For a finite group $H$, write $\pi(H)$ for the set of prime divisors
of $|H|$. For a set of primes $\sigma$, write $O^\sigma(H)$ for the
$\sigma$-residual of $H$, that is, the smallest normal subgroup
$K\trianglelefteq H$ such that $H/K$ is a $\sigma$-group.

\begin{theorem}\label{thm:coprime-action-supp}
Let $G=(X,+)$, $\Lambda=\lambda(X)\leq\operatorname{Aut}(G)$,
$\pi=\pi(\Lambda)$ and
$
N=O^{\pi'}(G).
$
Then $N$ is a strong left ideal of $X$. For $C$ as in
Definition~\ref{def:C-supplement},
$
C=\{c\in X:X*c\subseteq N\}
$
is a left ideal of $X$, and
\[
X=L_\infty(X)+C
 =L_\infty(X)\cdot C.
\]
Consequently,
\[
X=\RL(X)+C
 =\RL(X)\cdot C.
\]
Moreover, $N$ is an ideal of $C$ and $C/N$ is a trivial skew brace.
\end{theorem}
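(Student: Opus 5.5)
Since $N=O^{\pi'}(G)$ is characteristic in $G$, it is invariant under every $\lambda_a$ and normal in $G$; hence $N$ is a strong left ideal. For the left-ideal property of $C$, I would take $c\in C$ and $b\in X$ and show $\lambda_b(c)\in C$. Because $N$ is $\Lambda$-invariant, $\Lambda$ acts on $G/N$, so $\bar\Lambda$ is a group. The centralizer $C_{G/N}(\bar\Lambda)$ is then $\bar\Lambda$-invariant: for $\bar\beta\in\bar\Lambda$ and $\bar c$ fixed by all of $\bar\Lambda$, we have $\bar\beta(\bar c)=\bar c$. Taking preimages, $C$ is an additive subgroup containing $N$, and $\lambda_b(c)+N=c+N$ with $c\in C$ gives $\lambda_b(c)\in C$. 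The description $C=\{c:X*c\subseteq N\}$ is immediate from $a*c=\lambda_a(c)-c$, already noted in Definition~\ref{def:C-supplement}.

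The main step is $X=L_\infty(X)+C$, and I will do it through coprime action on $\bar G=G/N$. The group $\bar\Lambda$ is a quotient of $\Lambda$, so it is a $\pi$-group, while $\bar G$ is a $\pi'$-group. The standard coprime action result (e.g.\ \cite{Isaac}) gives
\[
\bar G=[\bar G,\bar\Lambda]\,C_{\bar G}(\bar\Lambda)\quad\text{and}\quad[\bar G,\bar\Lambda,\bar\Lambda]=[\bar G,\bar\Lambda].
\]
Hence $[\bar G,{}_n\bar\Lambda]=[\bar G,\bar\Lambda]$ for all $n$. Since commutators are compatible with the quotient map $\rho$, $\rho([G,{}_n\Lambda])=[\bar G,{}_n\bar\Lambda]$. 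Choosing $n$ with $X^{n+1}=L_\infty(X)$ and applying Proposition~\ref{prop:series}, this yields $\rho(L_\infty(X))=[\bar G,\bar\Lambda]$. Therefore
\[
\bar G=\rho(L_\infty(X))+\rho(C),
\]
and since $N\le C$, pulling back gives $G=L_\infty(X)+C$. I expect this to be the only substantive point; the care needed is to verify that $\rho$ commutes with iterated commutators, which holds because $N$ is $\Lambda$-invariant and normal.

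For the multiplicative form, I would use that $L_\infty(X)$ is a left ideal. For $l\in L_\infty(X)$ and $c\in C$, we have $l+c=l\cdot\lambda_l^{-1}(c)=l\cdot\lambda_{l^{-1}}(c)$, and $\lambda_{l^{-1}}(c)\in C$ because $C$ is a left ideal. Thus $L_\infty(X)+C\subseteq L_\infty(X)\cdot C$. Since both sides are sets of size $|L_\infty(X)||C|/|L_\infty(X)\cap C|$ (the multiplicative product of two subgroups has this size, and the intersection is the same set), they are equal. Alternatively, the reverse inclusion $l\cdot c=l+\lambda_l(c)$ gives equality directly. The statements for $\RL(X)$ then follow from $L_\infty(X)\le\RL(X)$ (Theorem~\ref{thm:residual}(3)) together with the remark at the start of this section that $X=R+C$ is equivalent to $X=R\cdot C$.

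Finally, $N\subseteq C$, and $N$ is a left ideal of $X$, hence $\lambda_c$-invariant for $c\in C$, and normal in $(C,+)$; so $N$ is a strong left ideal of $C$. For ideality in $C$, it suffices by \cite[Proposition~2.2]{DE} to show $N*C\subseteq N$, and this holds since $n*c\in X*c\subseteq N$. In $C/N$ every star product $c*c'$ lies in $X*c'\subseteq N$. Thus $\lambda$ acts trivially on $C/N$, so $a\cdot b=a+b$ there, and $C/N$ is a trivial skew brace.
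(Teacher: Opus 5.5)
Your proposal is correct and follows the paper's proof step by step. Coprime action of $\bar\Lambda$ on $G/N$ gives $G/N=[G/N,\bar\Lambda]\,C_{G/N}(\bar\Lambda)$ with $[G/N,\bar\Lambda]=\rho(L_\infty(X))$, and pulling back yields $X=L_\infty(X)+C$; the left-ideal property of $C$ gives the multiplicative form; and $X*C\subseteq N$ together with \cite[Proposition~2.2]{DE} gives $N\trianglelefteq C$ and that $C/N$ is trivial (your cardinality count in the multiplicative step is unnecessary, since $X=L_\infty(X)+C\subseteq L_\infty(X)\cdot C$ already forces equality).
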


\begin{proof}
The subgroup $N=O^{\pi'}(G)$ is characteristic in $G$ and
$\Lambda$-invariant. Hence $N$ is a strong left ideal of $X$.

Let $\rho\colon G\to G/N$ be the natural map, and let $\bar\Lambda$
be the group induced by $\Lambda$ on $G/N$. Since $\bar\Lambda$ is a
$\pi$-group and $G/N$ is a $\pi'$-group, the action of
$\bar\Lambda$ on $G/N$ is coprime. The standard coprime-action identities
\cite[Lemma~4.29 and Theorem~4.34]{Isaac} give
$
G/N=[G/N,\bar\Lambda]C_{G/N}(\bar\Lambda)
$
and
$
[G/N,\bar\Lambda,\bar\Lambda]
=[G/N,\bar\Lambda].
$
Thus $[G/N,\bar\Lambda]$ is the final term of the repeated
commutator series of $G/N$ under the action of $\bar\Lambda$. By the definition of $C$,
 $N\leq C$ and
$
C/N=C_{G/N}(\bar\Lambda).
$
Since the fixed-point subgroup $C_{G/N}(\bar\Lambda)$ is
$\bar\Lambda$-invariant, its inverse image $C$ is a
$\Lambda$-invariant subgroup of $G$ whence $C$ is a left ideal of
$X$.

By Proposition~\ref{prop:series}, the image of $L_\infty(X)$ in
$G/N$ is the final commutator of $G/N$ under $\bar\Lambda$.
Therefore
$
\rho\bigl(L_\infty(X)\bigr)=[G/N,\bar\Lambda].
$
The coprime-action decomposition gives
\[
G/N
 =\rho\bigl(L_\infty(X)\bigr)(C/N).
\]
Since $N\leq C$, taking inverse images yields
$X=L_\infty(X)+C.
$ As $C$ is a left ideal, $\lambda_x(C)=C$ for every $x\in X$.
Consequently,
$
x\cdot C=x+\lambda_x(C)=x+C,
$
and hence
$
X=L_\infty(X)\cdot C.
$
Since $L_\infty(X)\leq\RL(X)$, it follows that
$
X=\RL(X)+C=\RL(X)\cdot C.
$
Finally, the definition of $C$ gives
$
X*C\subseteq N.
$
In particular,
$
N*C\subseteq N$ and $
C*C\subseteq N.
$
Since $N$ is a strong left ideal of $C$ and $N*C\subseteq N$,
\cite[Proposition~2.2]{DE} shows that $N$ is an ideal of $C$.
Moreover, $C*C\subseteq N$, and hence $C/N$ is a trivial skew
brace.
\end{proof}

\begin{corollary}\label{cor:Cpi-residual}
With the notation of Theorem~\ref{thm:coprime-action-supp},
$
\RL(C)\leq N\cap\RL(X).
$
In particular, if $N\cap\RL(X)=0$, then $C$ is left nilpotent
and hence is a left-nilpotent supplement to $\RL(X)$.
\end{corollary}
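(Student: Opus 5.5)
We need to show $\RL(C)\le N\cap\RL(X)$. The plan is to prove the two inclusions $\RL(C)\le N$ and $\RL(C)\le\RL(X)$ separately, and then obtain the "in particular" clause from Theorem~\ref{thm:residual}(4).

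For $\RL(C)\le\RL(X)$, note that $C$ is a left ideal of $X$ by Theorem~\ref{thm:coprime-action-supp}, hence a subskew brace. Lemma~\ref{lem:subskew-residual} then gives
\[
\RL(C)\le C\cap\RL(X)\le\RL(X).
\]

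For $\RL(C)\le N$, we use the last assertion of Theorem~\ref{thm:coprime-action-supp}: $N$ is an ideal of $C$ and $C/N$ is a trivial skew brace. A trivial skew brace $T$ satisfies $T*T=0$, so $T^2=0$ and $T$ is left nilpotent. Hence $N$ belongs to the family of ideals $I$ of $C$ with $C/I$ left nilpotent. By the definition of the residual (equivalently Theorem~\ref{thm:residual}(2) applied to $C\to C/N$), this gives $\RL(C)\le N$. Alternatively one can argue directly: $C^2=C*C\subseteq N$, so $L_\infty(C)\le C^2\le N$, and since $N$ is an ideal of $C$ we get $\Id_C(L_\infty(C))\le N$.

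Combining the two inclusions gives $\RL(C)\le N\cap\RL(X)$. If $N\cap\RL(X)=0$, then $\RL(C)=0$, and Theorem~\ref{thm:residual}(4) shows that $C$ is left nilpotent. Theorem~\ref{thm:coprime-action-supp} already gives $X=\RL(X)+C$, so $C$ is a left-nilpotent supplement to $\RL(X)$.

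There is no real obstacle here. The only point to check is that trivial skew braces are left nilpotent, which is immediate from $a*b=0$ for all $a,b$.
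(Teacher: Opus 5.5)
Your proposal is correct and follows essentially the same route as the paper. You get $\RL(C)\le N$ from $N$ being an ideal of $C$ with $C/N$ trivial, hence left nilpotent. You get $\RL(C)\le C\cap\RL(X)$ from Lemma~\ref{lem:subskew-residual}, and the ``in particular'' clause then follows from Theorem~\ref{thm:residual}(4) together with $X=\RL(X)+C$.
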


\begin{proof}
Since $N$ is an ideal of $C$ and $C/N$ is trivial, the quotient
$C/N$ is left nilpotent. Therefore
$
\RL(C)\leq N.
$
Since $C\leq X$, Lemma~\ref{lem:subskew-residual} gives
$
\RL(C)\leq C\cap\RL(X)
$ and the result follows.
\end{proof}

\subsection{Existence results}

We next give several existence results for supplements and
complements of the residual. Set $R=\RL(X)$ and $Q=X/R.$

\begin{proposition}\label{prop:coprime}
If
$
\gcd\bigl(|R|,[X:R]\bigr)=1,
$
then $R$ has a left-nilpotent complement in $X$.
\end{proposition}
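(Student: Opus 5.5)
The plan is to invoke the Schur--Zassenhaus theorem for finite skew braces \cite{DameleSZ}, which is already cited in the introduction for exactly this purpose. Since $R=\RL(X)$ is an ideal of $X$ by Theorem~\ref{thm:residual}, and $\gcd(|R|,[X:R])=1$ by hypothesis, that theorem provides a subskew brace $C\leq X$ with $X=R+C$ and $R\cap C=0$. By the discussion at the beginning of this section, such a $C$ is a complement to $R$. The first step is therefore to check that the hypotheses of \cite{DameleSZ} are exactly ``$R$ is an ideal of coprime order and index''. If that version instead requires, say, solvability of one of the groups of $R$ or of $X/R$, I would note that the coprimality forces one of $|R|$, $[X:R]$ to be odd. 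Feit--Thompson then gives the needed solvability, just as in the group-theoretic Schur--Zassenhaus theorem.

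The second step is to show that $C$ is left nilpotent. Restrict the natural map $\rho\colon X\to X/R$ to $C$. It is a skew brace homomorphism, it is surjective because $X=R+C$, and its kernel is $C\cap R=0$. Hence $C\cong X/R=Q$. By Theorem~\ref{thm:residual}(2), $Q$ is left nilpotent, and left nilpotency is preserved under isomorphism. So $C$ is a left-nilpotent complement.

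I expect the only real obstacle to be the first step: matching the exact form of the skew-brace Schur--Zassenhaus theorem in \cite{DameleSZ}, in particular whether it requires the normal subobject to be an ideal and whether it has extra solvability assumptions. Once the complement exists, left nilpotency is automatic from $C\cong X/\RL(X)$. The argument does not need the nilpotent-type hypothesis of Section~\ref{sec:ideality}.
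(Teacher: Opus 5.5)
Your proposal is correct and follows the paper's proof: invoke the skew-brace Schur--Zassenhaus theorem \cite{DameleSZ} for the ideal $R$, then note that the complement $C$ satisfies $C\cong X/R$, so $C$ is left nilpotent by Theorem~\ref{thm:residual}(2). Your Feit--Thompson contingency is unnecessary here, since the paper uses the cited theorem without any extra solvability hypothesis.
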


\begin{proof}
By the Schur--Zassenhaus theorem for finite skew braces
\cite{DameleSZ}, the ideal $R$ has a subskew brace complement $C$.
Then
$
C\cong Q
$
and hence $C$ is left nilpotent.
\end{proof}

\begin{proposition}\label{prop:p-supp}
Suppose that $|Q|=p^a$ for a prime $p$. Then every Sylow
$p$-subskew brace $P$ of $X$ supplements $R$. More precisely,
$
X=R+P$ and $|P\cap R|=|R|_p.
$
In particular, $R$ has a left-nilpotent supplement.
\end{proposition}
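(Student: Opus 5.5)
We work through the quotient map $\rho\colon X\to Q=X/R$ and the Sylow theory for skew braces from \cite{Truman}: for every prime $p$, $X$ has Sylow $p$-subskew braces, namely subskew braces whose order is $|X|_p$. Their additive and multiplicative groups are then Sylow $p$-subgroups of $(X,+)$ and $(X,\cdot)$, respectively. Fix such a $P$.

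The first step is to show that $\rho(P)=Q$. Since $\rho$ is a skew brace homomorphism, $\rho(P)$ is a subskew brace of $Q$. Its additive group $(P+R)/R$ is the image of a Sylow $p$-subgroup of $(X,+)$ under the group epimorphism $(X,+)\to(Q,+)$. The image of a Sylow subgroup under an epimorphism of finite groups is a Sylow subgroup of the image. Since $|Q|=p^a$, this Sylow subgroup is all of $(Q,+)$. Hence $P+R=X$, which is the supplement condition; by the discussion at the start of this section it is also equivalent to $X=R\cdot P$.

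The second step is the order count. From $X=R+P$ we get $|X|=|R||P|/|R\cap P|$, so
\[
|P\cap R|=\frac{|R||P|}{|X|}=\frac{|R|\,|X|_p}{|X|}.
\]
Since $|X|=|R|\,p^a$, we have $|X|_p=|R|_p\,p^a$. Substituting gives $|P\cap R|=|R|_p$.

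The final step is left nilpotency of $P$. The subskew brace $P$ has order a power of $p$, so both of its groups are $p$-groups. In particular $P$ is of nilpotent type with nilpotent multiplicative group, and Theorem~\ref{thm:csv} shows that $P$ is left nilpotent. Alternatively, one can cite the standard fact that skew braces of prime-power order are left nilpotent.

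The only genuinely nontrivial input is the existence of Sylow $p$-subskew braces in an arbitrary finite skew brace. No nilpotent-type hypothesis is available here, so we cannot simply take the Sylow subgroup of a nilpotent additive group. For this we rely on \cite{Truman}; the rest of the argument is elementary group theory.
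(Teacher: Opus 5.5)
Your proposal is correct and is essentially the paper's argument: existence of $P$ from \cite{Truman}, surjectivity of $P\to Q$ by a $p$-part count, the order formula for $|P\cap R|$, and Theorem~\ref{thm:csv} for left nilpotency of $P$. The only cosmetic difference is that you quote ``the image of a Sylow subgroup under an epimorphism is Sylow'' in the additive group, whereas the paper proves this directly from the bound $|P\cap R|\le|R|_p$, which gives $\rho(P)=Q$ and $|P\cap R|=|R|_p$ in one step.
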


\begin{proof}
Existence in the generality used here follows from Truman's Sylow
theorem \cite{Truman} (see also \cite{CDDFT} for earlier Sylow theorems
under broad structural hypotheses). Write $|R|_p=p^b$ and let $P$ be a
Sylow $p$-subskew brace. Then
$
|P|=|X|_p=p^{a+b}.$
For the natural projection $\rho\colon X\to Q$, we have
$
\ker(\rho|_P)=P\cap R$ and
$|P\cap R|\leq p^b.
$ Therefore
\[
|\rho(P)|
 =\frac{|P|}{|P\cap R|}
 \geq\frac{p^{a+b}}{p^b}
 =p^a
 =|Q|.
\] Since $\rho(P)\leq Q$, we get $\rho(P)=Q$ and
$
|P\cap R|=p^b=|R|_p.
$ Since both underlying groups of $P$ are $p$-groups,
Theorem~\ref{thm:csv} shows that $P$ is left nilpotent.
\end{proof}

\begin{corollary}\label{prop:hall-supp}
Let $\sigma=\pi(Q)$, and let $H$ be a Hall
$\sigma$-subskew brace of $X$. Then
$
X=R+H$, $|H\cap R|=|R|_\sigma,$ $
H/(H\cap R)\cong Q,$ and $\RL(H)\leq H\cap R.
$
In particular, if both $(X,+)$ and $(X,\cdot)$ are solvable, such a
Hall $\sigma$-subskew brace exists.
\end{corollary}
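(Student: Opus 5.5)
The plan is to mirror the proof of Proposition~\ref{prop:p-supp}, with the prime $p$ replaced by the set $\sigma=\pi(Q)$ and the Sylow subskew brace replaced by the Hall $\sigma$-subskew brace $H$. By definition, $|H|=|X|_\sigma$. Write $|Q|=|X|/|R|$. Since $\sigma$ contains every prime divisor of $|Q|$, we have $|Q|_\sigma=|Q|$, and therefore
\[
|X|_\sigma=|Q|\,|R|_\sigma .
\]

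\textbf{Supplement and intersection.} Let $\rho\colon X\to Q$ be the natural projection; it is a homomorphism of skew braces because $R$ is an ideal. Its restriction to $H$ has kernel $H\cap R$. This kernel is a subgroup of both $H$ and $R$, so its order divides $\gcd(|H|,|R|)$. Since $|H|$ is a $\sigma$-number, this gcd divides $|R|_\sigma$, and hence $|H\cap R|\le|R|_\sigma$. It follows that
\[
|\rho(H)|=\frac{|H|}{|H\cap R|}\ \ge\ \frac{|Q|\,|R|_\sigma}{|R|_\sigma}=|Q|.
\]
Because $\rho(H)\le Q$, this forces $\rho(H)=Q$ and $|H\cap R|=|R|_\sigma$. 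The equality $\rho(H)=Q$ is exactly the statement $X=R+H$, by the discussion at the start of Section~\ref{sec:supp}. The first isomorphism theorem for $\rho|_H$ then gives $H/(H\cap R)\cong Q$.

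\textbf{The residual of $H$.} The inclusion $\RL(H)\le H\cap R$ follows directly from Lemma~\ref{lem:subskew-residual} applied to the subskew brace $H\le X$. Alternatively, one can note that $H\cap R$ is an ideal of $H$ with left-nilpotent quotient $H/(H\cap R)\cong Q$, and use the definition of the residual.

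\textbf{Existence under solvability.} The existence of a Hall $\sigma$-subskew brace when both $(X,+)$ and $(X,\cdot)$ are solvable is supplied by Truman's Hall theorem \cite{Truman}, which is cited in the introduction for exactly this purpose. The only step I expect to need care is this appeal: I must check that Truman's hypotheses are precisely that both underlying groups are solvable, and that his notion of a Hall $\sigma$-subskew brace is a subskew brace of order $|X|_\sigma$. That order is the only property used in the counting argument above. Everything else is the elementary counting that already appears in Proposition~\ref{prop:p-supp}.
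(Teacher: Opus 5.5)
Your proof is correct and is essentially the paper's argument. The only variation is how surjectivity is obtained: the paper observes that $[Q:\rho(H)]=[X:R+H]$ divides both the $\sigma'$-number $[X:H]$ and the $\sigma$-number $|Q|$, and only then reads off $|H\cap R|=|R|_\sigma$ from $|H|=|R|_\sigma|Q|$, whereas you reuse the Lagrange bound $|H\cap R|\le|R|_\sigma$ from Proposition~\ref{prop:p-supp} to get both conclusions at once. The remaining steps (the residual bound via $H/(H\cap R)\cong Q$ or Lemma~\ref{lem:subskew-residual}, and existence via \cite{Truman}) match the paper.
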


\begin{proof}
Let $\rho:X\to Q$ be the natural projection. Since $H$ is a Hall
$\sigma$-subskew brace,
$
[Q:\rho(H)]=[X:R+H]
$
divides $[X:H]$ which is a $\sigma'$-number. On the other hand,
$[Q:\rho(H)]$ divides $|Q|$ which is a $\sigma$-number. Therefore
$
[Q:\rho(H)]=1
$, that is $\rho(H)=Q$ and hence
$
X=R+H.
$

Let $J=H\cap R$. Then 
$
H/J\cong Q
$
and
$
|H|
 =|X|_\sigma
 =|R|_\sigma|Q|
 =|J||Q|,
$
and consequently
$
|J|=|R|_\sigma.
$
Since $H/J\cong Q$ is left nilpotent, we have
$
\RL(H)\leq J=H\cap R.
$

If both $(X,+)$ and $(X,\cdot)$ are solvable, the existence of $H$ follows
from \cite{Truman}.
\end{proof}
\subsection{Failure of left-nilpotent supplements}

The preceding existence results do not hold without additional
hypotheses.

\begin{theorem}\label{thm:no-supp}
There exists a finite skew brace $X$ for which $\RL(X)$ has no proper
supplement. In particular, $\RL(X)$ has no left-nilpotent supplement.
The skew brace
$
X=\SB{18}{15}
$
is such an example.
\end{theorem}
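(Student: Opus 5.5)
Since the statement names a specific skew brace from the database, the proof is a concrete verification of the structure of $X=\SB{18}{15}$, organized so that most of it follows from earlier results and only a small computation is left. First I would compute $L_\infty(X)$ and $\RL(X)$. I would determine the left series of $X$ (via Lemma~\ref{lem:dict}, $X^{n+1}=[X^n,\Lambda]$ with $\Lambda=\lambda(X,\cdot)$), find the final term $L=L_\infty(X)$, and then form $R=\Id_X(L)$ by Theorem~\ref{thm:residual}(1). I expect $R$ to be large, most plausibly of order $9$ (the Sylow $3$-part of $(X,+)$), with $Q=X/R$ of order $2$. If instead $|Q|=3^a$ or $|Q|$ were coprime to $|R|$, then Propositions~\ref{prop:p-supp} and~\ref{prop:coprime} would already produce supplements. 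So the expected shape is $|Q|=2$, which forces $|R|=9$. Either way, the point to check is that $[X:R]$ and $|R|$ fail the hypotheses of the existence results for a structural reason.

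Next I would reduce ``no proper supplement'' to a statement about small subskew braces. A proper supplement $C<X$ satisfies $X=R+C$, so $|C|\,|R|/|C\cap R|=18$ and $C$ contains an element outside $R$. In the expected case $|R|=9$, $|Q|=2$, every proper supplement has order $2$ or $6$. By Lemma~\ref{lem:subskew-residual} and Theorem~\ref{thm:residual}(2), any left-nilpotent supplement must also map onto $Q$. Hence it suffices to list the subskew braces of orders $2$ and $6$ and show that each lies inside $R$ or fails to be closed under one of the operations.

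For order $2$, I would argue through a $2$-element $t$ outside $R$. Here $\{0,t\}$ is a subskew brace exactly when $t+t=0=t\cdot t$, so I would show that no element of additive order $2$ outside $R$ has multiplicative order $2$; note $t\cdot t=t+\lambda_t(t)$. For order $6$, a supplement $C$ meets $R$ in a subgroup $C\cap R$ of order $3$ that is a subskew brace of $C$. I would then show that no order-$3$ subgroup $T\le R$ is normalized, in both groups, by any $t\notin R$ with $\lambda_t(T)=T$ in a way that closes $T\cup(t+T)$ under both operations. I plan to do this by writing down $\lambda_t$ on $R\cong C_3\times C_3$ (or $C_9$) explicitly as a $2\times 2$ matrix over $\mathbb F_3$, together with the translation data relating $t\cdot r$ and $t+r$.

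The main obstacle is this last explicit computation. It relies on the database presentation of $\SB{18}{15}$, which the paper has not given, and the cleanest route is to cite a computer verification (GAP with the YangBaxter package), checking all subskew braces $C$ with $R+C=X$. To make the argument human-checkable, I would try to show the key obstruction directly. The likely mechanism is that every element outside $R$ of additive order $2$ acts on $R$ via $\lambda$ with no invariant line on which the multiplicative and additive structures agree. It is also possible that all such elements have multiplicative order $6$, which rules out order-$2$ supplements immediately. Finally, the ``in particular'' clause is immediate: a left-nilpotent supplement would be proper, since $X$ itself is not left nilpotent because $L\neq 0$.
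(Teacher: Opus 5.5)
\textbf{There is a genuine gap.} Your overall plan matches the paper's: compute $R$, use $|C|\,|R|=|X|\,|C\cap R|$ to restrict the orders of a proper supplement, then inspect the subskew braces of those orders by GAP. The gap is that you predict the wrong residual, and the argument you give for the prediction defeats itself.

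If $|R|=9$ and $|Q|=2$, then $\gcd(|R|,[X:R])=1$, and $|Q|=2^1$ is a prime power. So Propositions~\ref{prop:coprime} and~\ref{prop:p-supp} would \emph{produce} a left-nilpotent complement, and the theorem would be false. You excluded only $|Q|$ a power of $3$, but Proposition~\ref{prop:p-supp} applies to every prime. Concretely:
\begin{itemize}
\item $X$ has three subskew braces of order $2$ (the subskew-brace orders are $1,2,2,2,3,3,3,3,6,9,18$). So your planned lemma that no $t\notin R$ with $t+t=0$ satisfies $t\cdot t=0$ is false. In your scenario each such $\{0,t\}$ would be a complement to $R$.
\item The subskew brace of order $6$ meets the normal Sylow $3$-subgroup of $(X,+)$ in a subgroup of order $3$. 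So it would supplement an $R$ equal to that Sylow subgroup.
\end{itemize}
The correct a priori constraint comes from Corollary~\ref{prop:hall-supp}. Both groups are solvable, so a Hall $\pi(Q)$-subskew brace supplements $R$, and it is proper unless $\pi(Q)=\{2,3\}$. Together with $R\neq 0$, this forces $|Q|=6$ and $|R|=3$.

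That is what actually happens. The left series is $18>3$, so $X^2=L_\infty(X)$ has order $3$ and is an ideal. Hence $R=L_\infty(X)$ by Theorem~\ref{thm:residual}(4). The counting identity then gives $|C|=6\,|C\cap R|$. So a proper supplement must have order $6$ and meet $R$ trivially. The obstruction is that every subskew brace of order $6$ contains $R$, which the paper reads off the GAP computation. You should therefore drop the order-$2$ analysis and the $2\times 2$ matrix computation over $\mathbb{F}_3$. The only thing to verify is that $R\le C$ for each subskew brace $C$ of order $6$. Your argument for the ``in particular'' clause ($X$ is not left nilpotent since $L_\infty(X)\neq 0$) is correct.
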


\begin{proof}
For $X=\SB{18}{15}$, we have
$
(X,+)\cong(X,\cdot)\cong S_3\times C_3.
$
Its left series has sizes $18>3$, and $L_\infty(X)$ is an ideal of
order $3$. Hence
$
R=\RL(X)=L_\infty(X)
$
by Theorem~\ref{thm:residual}(4).

The multiset of orders of all subskew braces of $X$ is
$
1,\;2,\;2,\;2,\;3,\;3,\;3,\;3,\;6,\;9,\;18.
$
If $C$ supplements $R$, then
$
|R|\,|C|=|X|\,|C\cap R|.
$
Since $|R|=3$ and $|X|=18$, it follows that
$
|C|=6|C\cap R|.
$
If $|C\cap R|=3$, then $|C|=18$, and hence $C=X$. Therefore every
proper supplement must satisfy
$
|C|=6
\quad\text{and}\quad
C\cap R=0.
$
However, every subskew brace of order $6$ contains $R$. Hence no
proper supplement exists. Since $L_\infty(X)\neq0$, the skew brace
$X$ itself is not left nilpotent.
\end{proof}

\section*{Computational methods}

The computations used in Examples~\ref{ex:18} and~\ref{ex:36} and
in the proof of Theorem~\ref{thm:no-supp} were carried out using
GAP~4.14.0 and version~0.10.7 of the GAP package
\textsf{YangBaxter}~\cite{YB}. A GAP verification script is available
from the authors upon request.
\section*{Declaration of Generative AI and AI-assisted technologies
in the writing process}

During the preparation of this manuscript, the authors used AI assistants
(OpenAI's ChatGPT and Anthropic's Claude) for language editing,
improvements to the presentation, and discussions of possible proof
strategies. All mathematical arguments were checked and verified by the
authors.


{\footnotesize
\begin{thebibliography}{99}
\enlargethispage{8\baselineskip}

\bibitem{CDDFT}
A.~Caranti, I.~Del Corso, M.~di Matteo, M.~Ferrara and M.~Trombetti,
\emph{On the Sylow theorem for skew braces},
Canad. Math. Bull. (2026), 1--15, published online 3 August 2026.
doi:10.4153/S0008439526102392; arXiv:2506.00940.

\bibitem{CedoVendraminBook}
F.~Ced\'o and L.~Vendramin,
\emph{Groups, Radical Rings, and the Yang--Baxter Equation:
A Combinatorial Approach to Solutions},
Progress in Mathematics, vol.~361, Birkh\"auser, Cham, 2026.

\bibitem{CSV}
F.~Ced\'o, A.~Smoktunowicz and L.~Vendramin,
\emph{Skew left braces of nilpotent type},
Proc. Lond. Math. Soc. (3) \textbf{118} (2019), no.~6, 1367--1392.

\bibitem{DameleSZ}
M.~Damele,
\emph{A Schur--Zassenhaus theorem for finite skew braces},
preprint, arXiv:2606.29295, 2026.

\bibitem{DE}
M.~Damele and G.~Ercan,
\emph{Ideals and solvability in skew braces},
preprint, arXiv:2607.19955, 2026.

\bibitem{DelCorso}
I.~Del Corso,
\emph{Module braces: relations between the additive and the multiplicative groups},
Ann. Mat. Pura Appl. (4) \textbf{202} (2023), 3005--3025.
doi:10.1007/s10231-023-01349-4.
\bibitem{EGGK}
G.~Ercan, \c{S}.~G\"ul, \.{I}.~\c{S}.~G\"ulo\u{g}lu and
M.~Y.~K{\i}zmaz,
\emph{Sylow theory and the nilpotency class of left nilpotent skew
braces},
preprint, arXiv:2606.25691v2, 2026.
\bibitem{GV}
L.~Guarnieri and L.~Vendramin,
\emph{Skew braces and the Yang--Baxter equation},
Math. Comp. \textbf{86} (2017), no.~307, 2519--2534.

\bibitem{IM}
I.~M. Isaacs and U.~Meierfrankenfeld,
\emph{Repeated and final commutators in group actions},
Proc. Amer. Math. Soc. \textbf{140} (2012), no.~11, 3777--3783.

\bibitem{Isaac}
I.~M. Isaacs,
\emph{Finite Group Theory},
Graduate Studies in Mathematics, vol.~92, American Mathematical Society,
Providence, RI, 2008.

\bibitem{KSV}
A.~Konovalov, A.~Smoktunowicz and L.~Vendramin,
\emph{On skew braces and their ideals},
Exp. Math. \textbf{30} (2021), no.~1, 95--104.

\bibitem{KSVErratum}
A.~Konovalov, A.~Smoktunowicz and L.~Vendramin,
\emph{Erratum to the paper ``On skew braces and their ideals''},
Exp. Math. \textbf{31} (2022), no.~1, 346.

\bibitem{Rump}
W.~Rump,
\emph{Braces, radical rings, and the quantum Yang--Baxter equation},
J. Algebra \textbf{307} (2007), no.~1, 153--170.

\bibitem{Truman}
P.~J. Truman,
\emph{Analogues of Sylow's first theorem, Cauchy's theorem, and Hall's theorem for skew braces},
preprint, arXiv:2606.18414, 2026.

\bibitem{YB}
L.~Vendramin and A.~Konovalov,
\emph{YangBaxter}, GAP package, version 0.10.7, 2025.

\end{thebibliography}
}
\end{document}